\journal{}
\newcommand{\Rr}{\mathrm{R}}
\newcommand{\om}{\omega}
\newcommand{\esp}{\quad\mbox{and}\quad}
\newcommand{\End}{\mathrm{End}}
\newcommand{\ass}{\mathrm{ass}}
\newcommand{\G}{{\mathfrak{g}}}
\newcommand{\h}{{\mathfrak{h}}}
\newcommand{\ad}{{\mathrm{ad}}}
\newcommand{\tr}{{\mathrm{tr}}}
\newcommand{\Ll}{{\mathrm{L}}}
\newcommand{\Om}{\Omega}
\newcommand{\na}{\nabla}
\newcommand{\wi}{\widetilde}
\font\bb=msbm10
\def\R{\hbox{\bb R}}
\newtheorem{Def}{Definition}[section]
\newtheorem{theo}{Theorem}[section]
\newtheorem{pr}{Proposition}[section]
\newtheorem{Le}{Lemma}[section]
\newtheorem{exem}{Example}[section]
\newtheorem{rem}{Remark}[section]
\begin{document}

\begin{frontmatter}




\title{Flat symplectic Lie algebras}

 \author[Boucetta]{Mohamed Boucetta}
\address[Boucetta]{Universit\'e Cadi-Ayyad,
	Facult\'e des sciences et techniques,
	B.P. 549, Marrakech, Maroc.\\m.boucetta@uca.ac.ma
}
\author[Hamza]{Hamza El Ouali}
\address[Hamza]{Universit\'e Cadi-Ayyad,
	Facult\'e des sciences et techniques,
	B.P. 549, Marrakech, Maroc.\\eloualihamza11@gmail.com
}
\author[Hicham Lebzioui]{Hicham Lebzioui\corref{mycorrespondingauthor}}
\cortext[mycorrespondingauthor]{Corresponding author}
\address[Hicham Lebzioui]{Universit\'e Sultan Moulay Slimane,
	\'{E}cole Sup\'{e}rieure de Technologie-Kh\'{e}nifra, B.P : 170, Kh\'{e}nifra, Maroc.\\h.lebzioui@usms.ma
}

\begin{abstract}
Let $(G,\Omega)$ be a symplectic Lie group, i.e, a Lie group endowed with a left invariant symplectic form.  If $\G$ is the Lie algebra of $G$ then we call $(\G,\omega=\Om(e))$ a symplectic  Lie algebra. The product $\bullet$ on $\G$ defined by $3\omega\left(x\bullet y,z\right)=\omega\left([x,y],z\right)+\omega\left([x,z],y\right)$ extends to a left invariant connection $\na$ on $G$ which  is torsion free and  symplectic ($\na\Om=0)$. When $\na$ has vanishing curvature, we call $(G,\Omega)$ a flat symplectic Lie group and $(\G,\om)$ a flat symplectic Lie algebra.  In this paper, we study flat symplectic Lie groups. We start by showing that the derived ideal of a flat symplectic Lie algebra is degenerate with respect to $\om$. We show that a flat symplectic Lie group must be nilpotent with degenerate center. This implies that the connection $\na$ of a flat symplectic Lie group is always complete. We prove that the double extension process can be applied to characterize all flat symplectic Lie algebras. More precisely, we show that every flat symplectic Lie algebra is obtained by a sequence of double extension of flat symplectic Lie algebras starting from $\{0\}$. As examples in low dimensions, we classify  all flat symplectic Lie algebras of dimension $\leq6$.  
\end{abstract}

\begin{keyword}   Symplectic Lie algebras \sep flat symplectic connections \sep  left symmetric algebras \sep double extension.

{\it{\bf 2020 Mathematics Subject Classification:}}17B60; 17B30; 17B10; 53D05.

\end{keyword}

\end{frontmatter}






\section{Introduction}
Let $(M,\Om)$ be a symplectic manifold, i.e., a manifold endowed with a nondegenerate closed 2-form. A {\it symplectic} connection is a connection $\na$ such that $\na\Om=0$. Contrasting with pseudo-Riemannian manifolds where there exits a unique torsion free connection which preserves the metric, namely, the Levi-Civita connection, on a symplectic manifold, there are many torsion free symplectic connections (see \cite{cahen}). However, in a symplectic Lie group $(G,\Om)$, among all the many torsion free symplectic left invariant connections there is  
one which depends only on the structure of Lie group and $\Om$. This connection appeared first in \cite{bou}. Indeed, 
let $(G,\Om)$ be a symplectic Lie group $(G,\Omega)$, i.e.,   a Lie group $G$ endowed with a left invariant symplectic form $\Omega$. Its Lie algebra $\G$ endowed by $\omega=\Omega(e)$ is called a symplectic Lie algebra and denoted by $(\G,\omega)$.  The product $\bullet$  on $\G$ defined by 
\begin{equation}\label{products}
\omega\left(u\bullet v,w\right)=\frac{1}{3}\left(\omega\left([u,v],w\right)+\omega\left([u,w],v\right)\right),
\end{equation}   for any $u,v,w\in\G$, extends to a left invariant connection $\na$ on $G$ which is torsion free and symplectic.
As one can see from \eqref{products}, $\na$ depends only on the structure of Lie group and $\Om$ and can be thought of as the Levi-Civita connection of a symplectic Lie group. In this paper, we study the case where this connection is of vanishing curvature. In this case, we call $(G,\Omega)$ a {\it flat symplectic Lie group} and $(\G,\omega)$ a {\it flat symplectic Lie algebra}.    

Recall that if $(G,\Om)$ is a symplectic Lie group, then the left-invariant connection $\overline{\na}$ defined by
\begin{equation}\label{natural}
\om\left(\overline{\na}_uv,w\right)=\om\left(v,[w,u]\right), \mbox{ for any }u,v,w\in\G,  
\end{equation}
is of vanishing curvature. Note that $\overline{\na}$ is symplectic ($\overline{\na}\Om=0$) if and only if $G$ is abelian.

In this paper, we show that if $(\G,\omega)$ is a flat symplectic Lie algebra then $Z(\G)$ and $[\G,\G]$ must be degenerate with respect to $\om$. We adapt the process of {\it double extension} developed in \cite{RevoyMedina, Aubert, DardieMedina} to our context and we show that all flat symplectic Lie algebras can be obtained by this process. Using this method, we prove that a flat symplectic Lie group must be nilpotent. Thus, the connection $\na$ in a flat symplectic Lie group is always geodesically complete. In particular, we deduce that flat symplectic Lie algebras are obtained by a sequence of double extension of flat symplectic Lie algebras starting from $\{0\}$. Finally, as applications, we classify all flat symplectic Lie algebras of dimension $\leq6$.

The paper contains five sections. In Section \ref{section2}, we give some general results on flat symplectic Lie algebras, and in particular, we prove that $[\G,\G]$ is degenerate in this case. We describe the method of double extension of flat symplectic Lie algebras in Section \ref{section3}. In Section \ref{section4}, we prove that every flat symplectic Lie algebra is nilpotent with degenerate center. We also show in this section that the double extension process characterize all flat symplectic Lie algebras. In Section \ref{section5}, we show that, in dimension 4, there exists only one non-abelian flat symplectic Lie algebra, and we classify up to a Lie isomorphism, all 6-dimensional flat symplectic Lie algebras.

Throughout this paper, $\G$ is a finite dimensional Lie algebra over the real field $\mathbb{R}$. A symplectic vector space $(V,\omega)$ is a real finite dimensional vector space $V$ endowed with a non-degenerate bilinear skew-symmetric form $\omega$. A symplectic basis of a symplectic vector space $(V,\omega)$ is a basis $\{e_1,\ldots,e_{2n}\}$ of $V$ such that $\omega(e_i,e_{i+1})=-\omega(e_{i+1},e_i)=1$, $i=1,\ldots,2n-1$, and all others are zero. Let $F$ be a subspace of $(V,\omega)$. $F^\bot$ is the subspace defined by $F^\bot=\{x\in\G/\omega(x,y)=0\mbox{ for any } y\in F\}$. $F$ is said to be degenerate (resp. non-degenerate, totally isotropic, lagrangian) if $F\cap F^\bot\neq\{0\}$ (resp. $F\cap F^\bot=\{0\}$, $F\subset F^\bot$, $F=F^\bot$). If $f:V\longrightarrow V$ then $f^*$ is the endomorphism of $V$ defined by $\omega(f(x),y)=\omega(x,f^*(y))$ for any $x,y\in V$.  
\section{Preliminaries} \label{section2}
Let $(G,\Om)$ be a symplectic Lie group, i.e., a Lie group endowed with a left invariant symplectic form. 
We consider the associated symplectic Lie algebra $(\G,\omega)$. For any $u\in\G$, we denote by $u^+$ the left invariant vector field on $G$ associated to $u$. The condition $d\Om=0$ is equivalent to   ,
\begin{equation}\label{sym}
\omega\left([u,v],w\right)+\omega\left([v,w],u\right)+\omega\left([w,u],v\right)=0,
\end{equation}for any $u,v,w\in\G$. The product $\bullet$ given by \eqref{products} 
defines a left invariant connection $\na$ on $G$ given by
\[ \na_{u^+}v^+=(u\bullet v)^+,\quad u,v\in\G. \]
 Note that \eqref{products} can be written
\begin{equation}\label{L} \mathrm{L}_u=\frac13\left(\ad_u-\ad_u^*   \right)  \end{equation}
where $\mathrm{L}_uv=u\bullet v$. This implies that $\Ll_u$ is skew-symmetric with respect to $\om$, i.e., 
\begin{equation}\label{dom} \om(u\bullet v,w)+\om(v,u\bullet w)=0 \end{equation}for any $u,v,w\in\G$. This is equivalent to $\na\Om=0$.  Moreover, by using \eqref{sym}, we can deduce that $\bullet$  is Lie admissible, i.e., $[u,v]=u\bullet v-v\bullet u$, for any $u,v\in\G$. This can be written $\ad_u=\Ll_u-\Rr_u$ where $\ad_uv=[u,v]$ and $\mathrm{R}_uv=v\bullet u$. At the Lie group level this is equivalent to $\na$ is torsion free.

Let ${Z}(\G)$ be the center of the Lie algebra $\G$,  $[\G,\G]$ its derived ideal,   $\G\G=\mathrm{span}\{u\bullet v/u,v\in\G\}$,  $N^\ell(\G)=\{u\in\G,\Ll_u=0\}$ and 
$N^r(\G)=\{u\in\G,\Rr_u=0\}$. It is obvious from \eqref{L} that $N^\ell(\G)=\{u\in\G,\ad_u^*=\ad_u\}$.
\begin{pr}\label{bot}
Let $(\G,\om)$ be a symplectic Lie algebra. Then
\[
[\G,\G]^\bot=\{u\in\G/\ad_u+\ad_u^*=0\}\esp Z(\G)= (\G\G)^\bot=N^\ell(\G)\cap N^r(\G)=
N^\ell(\G)\cap[\G,\G]^\perp.
\]
If $\G$ is not solvable, then the solvable radical $R(\G)$ is degenerate with respect to $\om$.
	
\end{pr}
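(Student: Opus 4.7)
The plan is to convert every orthogonality condition in the statement into a linear condition on $\ad_u$ and its symplectic adjoint $\ad_u^*$, using only the cocycle identity \eqref{sym}, the formula \eqref{L} for $\Ll_u$, and the compatibility \eqref{dom} of $\bullet$ with $\om$. The first equality is the cleanest: starting from $u\in[\G,\G]^\bot$, i.e.\ $\om([v,w],u)=0$ for all $v,w$, I would plug into \eqref{sym} to rewrite the condition as $\om([u,v],w)+\om([w,u],v)=0$, then recognize the two summands as $\om(\ad_u v,w)$ and $\om(\ad_u^* v,w)$ respectively (the second after flipping the sign of $[w,u]$ and moving $\ad_u$ across $\om$). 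Non-degeneracy of $\om$ then yields $\ad_u+\ad_u^*=0$.

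For the chain characterizing $Z(\G)$, I would first derive from \eqref{L} and the Lie-admissibility $\ad_u=\Ll_u-\Rr_u$ the companion formula $\Rr_u=-\tfrac13(2\ad_u+\ad_u^*)$. This immediately gives $N^\ell(\G)=\{u:\ad_u^*=\ad_u\}$ and $N^r(\G)=\{u:2\ad_u+\ad_u^*=0\}$; taking the adjoint of the second relation and combining with it yields $\ad_u=0$, so $N^r(\G)=Z(\G)$ and a fortiori $N^\ell(\G)\cap N^r(\G)=Z(\G)$. The intersection $N^\ell(\G)\cap[\G,\G]^\bot$ collapses even more trivially to $\{u:\ad_u=\ad_u^*=-\ad_u\}=Z(\G)$. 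Finally, to identify $(\G\G)^\bot$ I would apply \eqref{dom}: $\om(v\bullet w,u)=0$ for all $v,w$ is equivalent to $\om(w,v\bullet u)=0$, which by non-degeneracy means $\Rr_u=0$, so $(\G\G)^\bot=N^r(\G)=Z(\G)$.

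The degeneracy of $R(\G)$ in the non-solvable case is where the real work lies. I would argue by contradiction: if $R(\G)$ were non-degenerate, then $\G=R(\G)\oplus R(\G)^\bot$ and $\dim R(\G)^\bot$ equals the dimension of any Levi factor. Applying \eqref{sym} to two elements of $R(\G)^\bot$ and one element of $R(\G)$, together with the fact that $R(\G)$ is an ideal, shows $R(\G)^\bot$ is a Lie subalgebra. The projection $\G\to\G/R(\G)$ restricted to $R(\G)^\bot$ is then an injective Lie-algebra morphism and, by dimension count, an isomorphism, so $R(\G)^\bot$ is semisimple. But the restriction of $\om$ makes $R(\G)^\bot$ into a symplectic Lie algebra, which is impossible: by Whitehead's lemma any 2-cocycle on a semisimple Lie algebra is a coboundary $(u,v)\mapsto -\theta([u,v])$, and writing $\theta=B(u_0,\cdot)$ through the Killing form $B$ gives $\om(u,v)=-B([u_0,u],v)$, whose non-degeneracy would force $\ad_{u_0}$ to be injective, contradicting $\ad_{u_0}(u_0)=0$ for $u_0\neq 0$ (and giving $\om=0$ otherwise). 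Cleanly packaging this \emph{no symplectic form on a semisimple Lie algebra} step, and in particular verifying that $R(\G)^\bot$ is a Lie subalgebra using only the cocycle identity, is the part I expect to be the main obstacle.
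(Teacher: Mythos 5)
Your proof of the chain of equalities is correct and follows essentially the same route as the paper: the first identity from the cocycle condition \eqref{sym}, the identification $(\G\G)^\bot=\{u:\Rr_u=0\}$ from the skew-symmetry \eqref{dom} of $\Ll_u$, and the collapse of $N^r(\G)$ to $Z(\G)$ by playing $\Rr_u=0$ against $\Ll_u=\frac13(\ad_u-\ad_u^*)$ (the paper gets $\ad_u=-\frac12\ad_u^*$ and iterates, you take the adjoint of $2\ad_u+\ad_u^*=0$; same computation). Where you genuinely diverge is the last claim. Both arguments reduce to the same setup --- if $R(\G)$ were non-degenerate, then $R(\G)^\bot$ is a Lie subalgebra (your cocycle computation is exactly the paper's Proposition~\ref{ideal} applied to $I=R(\G)$) isomorphic to the semisimple quotient $\G/R(\G)$ and inherits a non-degenerate restriction of $\om$ --- but then the paper observes that $R(\G)^\perp$ is stable under the left-symmetric product of the flat connection $\overline{\na}$ from \eqref{natural} and invokes Helmstetter's theorem that the underlying Lie algebra of a left-symmetric algebra cannot be semisimple, whereas you prove directly that a semisimple Lie algebra admits no symplectic form via Whitehead's lemma: $\om(u,v)=-B([u_0,u],v)$ forces $\ad_{u_0}$ injective, contradicting $\ad_{u_0}u_0=0$. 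Your route is the classical one (Chu's theorem, which is in the paper's bibliography) and is more self-contained, needing only the second Whitehead lemma; the paper's route is shorter given that $\overline{\na}$ and the reference to Helmstetter are already on the table and it avoids cohomology entirely. Both are complete; the step you flagged as the main obstacle (that $R(\G)^\bot$ is a subalgebra) is indeed handled correctly by your cocycle argument.
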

\begin{proof} The first relation is an immediate consequence of \eqref{sym}. The equality $(\G\G)^\bot=\{u\in\G/\Rr_u=0\}$ follows immediately from \eqref{dom}. Moreover,
	by virtue of \eqref{L} if $u\in Z(\G)$ then $\Ll_u=0$ and hence $\Rr_u=0$. Suppose now that $\Rr_u=0$. Then $\ad_u=\Ll_u$ and hence $\ad_u^*=-\ad_u$. But $\Ll_u=\frac13(\ad_u-\ad_u^*)$ and hence $\ad_u=-\frac12\ad_u^*$. It follows that $\ad_u=0$. If $\G$ is not solvable and $R(\G)$ is nondegenerate then, according to \eqref{natural}, $R(\G)^\perp$ is stable by the left symmetric product induced by $\overline{\nabla}$. But $R(\G)^\perp$ is isomorphic to $\G/R(\G)$ which is semi-simple. This is a contradiction, since the Lie algebra associated to a left symmetric product cannot be semi-simple \cite{Hel}.
\end{proof}
Recall that $\G$ is unimodular ($G$ is unimodular) if, for any $u\in\G$, $\tr(\ad_u)=0$. Let $H$ be the vector defined by $\omega(H,u)=\tr(\ad_u)$ for any $u\in\G$.  Then $\G$ is unimodular if and only if $H=0$.

A vector subspace $I$ is called a left ideal (resp. right ideal) if $\G\bullet I\subset I$ (resp. $I\bullet \G\subset I$). It is called two-sided ideal if it is left and right ideal. We call $I$ a Lie ideal if $[\G,I]\subset I$. It is obvious that a two-sided ideal is a Lie ideal.

\begin{pr}\label{ideal} If $I$ is a Lie ideal of $\G$ then
	\[ I^\perp\bullet I\subset I,\;\; I\bullet I^\perp\subset I\esp I^\perp\bullet I^\perp\subset I^\perp. \]In particular, $I^\perp$ is a Lie subalgebra.

	\end{pr}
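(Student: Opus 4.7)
The plan is to translate each inclusion $A\bullet B\subset C$ into the pairing statement $\om(a\bullet b,c')=0$ for $a\in A$, $b\in B$, $c'\in C^\bot$, which is legitimate because $\om$ is non-degenerate and hence $(I^\bot)^\bot=I$. The only ingredients needed are the defining formula \eqref{products}, the skew-symmetry \eqref{dom}, the cocycle identity \eqref{sym}, and the Lie ideal condition $[\G,I]\subset I$. I would handle the three inclusions in the order $I^\bot\bullet I^\bot\subset I^\bot$, then $I^\bot\bullet I\subset I$, then $I\bullet I^\bot\subset I$, so that the second step can reuse the first.

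For the first inclusion, take $u,v\in I^\bot$ and $w\in I$. Formula \eqref{products} gives $3\om(u\bullet v,w)=\om([u,v],w)+\om([u,w],v)$. The second term vanishes because $[u,w]\in I$ and $v\in I^\bot$. For the first term, I would invoke \eqref{sym} to rewrite $\om([u,v],w)=-\om([v,w],u)-\om([w,u],v)$; both brackets lie in $I$, so they pair trivially against $u,v\in I^\bot$.

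The second inclusion follows from step one via skew-symmetry: for $u\in I^\bot$, $v\in I$, $w\in I^\bot$, equation \eqref{dom} gives $\om(u\bullet v,w)=-\om(v,u\bullet w)$, which is zero because $u\bullet w\in I^\bot$ by the previous step. The third inclusion is direct from \eqref{products}: for $u\in I$ and $v,w\in I^\bot$, both $[u,v]$ and $[u,w]$ lie in $I$ because $I$ is a Lie ideal, so $\om([u,v],w)$ and $\om([u,w],v)$ both vanish. The concluding Lie subalgebra statement is then immediate from $[u,v]=u\bullet v-v\bullet u$ combined with the first inclusion.

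There is no serious obstacle here; the only subtlety worth flagging is that the three inclusions are not strictly parallel (the first genuinely needs the cocycle identity \eqref{sym}, while the others collapse onto either skew-symmetry or a direct application of \eqref{products}), so they should be proved in the order above rather than attempted simultaneously.
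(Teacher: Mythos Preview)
Your proposal is correct and follows essentially the same approach as the paper: both arguments use \eqref{sym} to kill the $\om([u,v],w)$ term for $u,v\in I^\perp$, $w\in I$, then deduce $I^\perp\bullet I\subset I$ from skew-symmetry of $\Ll_u$, and handle $I\bullet I^\perp\subset I$ directly from \eqref{products}. The only cosmetic difference is ordering: the paper first isolates $\om([u,v],w)=0$ to conclude that $I^\perp$ is a Lie subalgebra and then feeds this into \eqref{products}, whereas you fold the \eqref{sym} computation into the proof of $I^\perp\bullet I^\perp\subset I^\perp$ and extract the subalgebra statement at the end via $[u,v]=u\bullet v-v\bullet u$.
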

	\begin{proof} From \eqref{sym}, we have for any $u,v\in I^\perp$ and $w\in I$, we get that $\om([u,v],w)=0$ and hence $I^\perp$ is a Lie subalgebra. Moreover, for any $u, v\in I^\perp$ and any $w\in I$, by using \eqref{products}, we get that $\om(u\bullet v,w)=0$ and hence $I^\perp\bullet I^\perp\subset I^\perp$. Since the $\Ll_u$ are skew-symmetric, we deduce that $I^\perp\bullet I\subset I$. On the other hand, for any $u\in I$ and $v,w\in I^\perp$, by virtue of \eqref{products}, $\om(u\bullet v,w)=0$ and hence $I\bullet I^\perp\subset I$.
		\end{proof}

The symplectic Lie group $(G,\Om)$ is flat if the curvature $K$ of $\na$ vanishes identically.  This is equivalent to  
 \begin{equation}\label{flat} \mathrm{L}_{[u,v]}=[\mathrm{L}_u,\mathrm{L}_v] \end{equation} for any $u,v\in\G$. One can see easily that \eqref{flat} is equivalent to 
 \begin{equation}\label{flatbis}
 \Rr_{u\bullet v}-\Rr_v\circ\Rr_u=\left[\Ll_u,\Rr_v\right],
 \end{equation}{ for any } $u,v\in\G$. In this case we call $(\G,\omega)$  a flat symplectic Lie algebra. 
 
 We give here, a simple criteria to show that a symplectic Lie algebra $(\G,\omega)$ is flat.
 \begin{pr}
 Let $(\G,\omega)$ be a symplectic Lie algebra. If $Z(\G)^\bot\subset Z(\G)$ (in particular if $Z(\G)$ is lagrangian), then $(\G,\omega)$ is flat.	In this case $\bullet$ is associative and $(\G,[\ ,\ ])$ is a two-step nilpotent Lie algebra. 
 \end{pr}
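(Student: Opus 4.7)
The plan is to convert the hypothesis $Z(\G)^\bot\subset Z(\G)$ into the algebraic statement $\G\bullet\G\subset Z(\G)$ and then read off all three conclusions by quoting Proposition \ref{bot}. The key observation will be that Proposition \ref{bot} gives the orthogonality relation $Z(\G)=(\G\bullet\G)^\bot$, so passing to $\bot$ (and using $F^{\bot\bot}=F$ for the non-degenerate form $\om$) yields $\G\bullet\G=Z(\G)^\bot$. The hypothesis then immediately reads $\G\bullet\G\subset Z(\G)$.

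Next I would exploit the other identity from Proposition \ref{bot}, namely $Z(\G)=N^\ell(\G)\cap N^r(\G)$. This means that every element $z\in\G\bullet\G$ belongs to $Z(\G)$ and therefore satisfies $\Ll_z=0$ and $\Rr_z=0$. In other words, for all $u,v,w\in\G$ one has
\begin{equation*}
(u\bullet v)\bullet w=\Ll_{u\bullet v}(w)=0
\quad\text{and}\quad
w\bullet(u\bullet v)=\Rr_{u\bullet v}(w)=0.
\end{equation*}
Associativity of $\bullet$ is then trivial (both triple products vanish). For the two-step nilpotency, since $\bullet$ is Lie-admissible, $[u,v]=u\bullet v-v\bullet u\in\G\bullet\G\subset Z(\G)$, so $[\G,\G]\subset Z(\G)$, which is precisely $[\G,[\G,\G]]=0$.

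It only remains to verify flatness, for which I would check \eqref{flat} directly. Using $[u,v]\in Z(\G)$, we get $\Ll_{[u,v]}=0$. On the other hand, for any $w\in\G$, the element $v\bullet w$ lies in $\G\bullet\G\subset Z(\G)\subset N^r(\G)$, so $u\bullet(v\bullet w)=\Rr_{v\bullet w}(u)=0$, and symmetrically $v\bullet(u\bullet w)=0$; hence $[\Ll_u,\Ll_v](w)=0$. Thus $\Ll_{[u,v]}=[\Ll_u,\Ll_v]$, which is the flatness condition.

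There is really no significant obstacle here: the entire argument is a one-line manipulation (apply $\bot$ to $Z(\G)=(\G\bullet\G)^\bot$) followed by the use of $Z(\G)\subset N^\ell(\G)\cap N^r(\G)$. The parenthetical case when $Z(\G)$ is lagrangian is just the extreme case $Z(\G)^\bot=Z(\G)$, so it is automatically covered.
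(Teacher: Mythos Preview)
Your argument is correct and follows essentially the same route as the paper: both use Proposition~\ref{bot} to turn $Z(\G)^\bot\subset Z(\G)$ into $\G\bullet\G\subset Z(\G)$, and then exploit $Z(\G)\subset N^\ell(\G)\cap N^r(\G)$ to make all triple $\bullet$-products vanish. The only cosmetic difference is that the paper infers flatness directly from associativity (an associative product is trivially left symmetric), whereas you verify \eqref{flat} explicitly.
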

  \begin{proof}
  	If $Z(\G)^\bot\subset Z(\G)$, then from $Z(\G)=(\G\G)^\bot$, we deduce that  $(\G\G)\subset Z(\G)$. Since $Z(\G)\subset N^\ell(\G)$, then $(u\bullet v)\bullet w=u\bullet(v\bullet w)=0$ for any $u,v,w\in\G$. Thus $\bullet$ is an associative product and $(\G,\omega)$ is flat. Since $[u,v]=u\bullet v-v\bullet u$, then $[\G,\G]\subset Z(\G)$ and $(\G,[\ ,\ ])$ is two-step nilpotent in this case.
  \end{proof}
 \begin{exem}
 	Let $\G^2_6$ the 6-dimensional two-step nilpotent Lie algebra defined by the only non-vanishing Lie brackets $[x_1,x_2]=x_5$ and $[x_1,x_3]=x_6$. It is proved in page 52 of \cite{khakim} that $\G^2_6$ admits three non-isometric symplectic forms: $\omega_1=x_1^*\wedge x_6^*+x_2^*\wedge x_5^*+x_3^*\wedge x_4^*$,  $\omega_2=x_1^*\wedge x_4^*+x_2^*\wedge x_6^*+x_3^*\wedge x_5^*$ and $\omega_3=x_1^*\wedge x_6^*+x_2^*\wedge x_5^*-x_3^*\wedge x_4^*$. Since $Z(\G)$ is lagrangian with respect to $\omega_1$, $\omega_2$ and $\omega_3$, then every symplectic form on $\G^2_6$ is flat.
 \end{exem}
\begin{pr}\label{prbot}
Let $(\G,\omega)$ be a flat symplectic Lie algebra. Then 
\begin{enumerate}
	\item For any $u,v\in[\G,\G]^\bot$, $u\bullet v=0$ and $\ad_u\circ\ad_v=0$. In particular, $[\G,\G]^\bot$ is abelian and, for any $u\in[\G,\G]^\perp$, $\ad_u^2=0$.
	\item $H\in[\G,\G]\cap[\G,\G]^\bot$.
\end{enumerate}	
\end{pr}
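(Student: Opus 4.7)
The plan is to exploit a single clean structural identity: on $[\G,\G]^\perp$ the left multiplication is a scalar multiple of $\ad$. By Proposition~\ref{bot}, $u\in[\G,\G]^\perp$ is equivalent to $\ad_u^{*}=-\ad_u$, so \eqref{L} gives $\Ll_u=\tfrac{2}{3}\ad_u$ and consequently $\Rr_u=\Ll_u-\ad_u=-\tfrac{1}{3}\ad_u$. This identity is purely symplectic (no flatness is needed for it) and drives the whole argument.

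For part (1), take $u,v\in[\G,\G]^\perp$. Then $u\bullet v=\Ll_u(v)=\tfrac{2}{3}[u,v]$ and, symmetrically, $v\bullet u=-\tfrac{2}{3}[u,v]$. Lie-admissibility $[u,v]=u\bullet v-v\bullet u$ becomes $[u,v]=\tfrac{4}{3}[u,v]$, forcing $[u,v]=0$ and hence $u\bullet v=0$; in particular $[\G,\G]^\perp$ is already abelian without invoking the curvature. To upgrade this to $\ad_u\circ\ad_v=0$, I would substitute into the flatness identity \eqref{flatbis}: since $u\bullet v=0$ the left-hand side reduces to $-\tfrac{1}{9}\ad_v\circ\ad_u$, while the right-hand side is $[\tfrac{2}{3}\ad_u,-\tfrac{1}{3}\ad_v]=-\tfrac{2}{9}\ad_{[u,v]}=0$. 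Hence $\ad_v\circ\ad_u=0$, and by swapping $u,v$ also $\ad_u\circ\ad_v=0$; setting $u=v$ yields $\ad_u^{2}=0$.

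Part (2) then falls out of (1). For any $x,y\in\G$, $\omega(H,[x,y])=\tr(\ad_{[x,y]})=\tr([\ad_x,\ad_y])=0$, so $H\in[\G,\G]^\perp$; this inclusion uses only that trace kills commutators. Conversely, for any $u\in[\G,\G]^\perp$, part (1) gives $\ad_u^{2}=0$, so $\ad_u$ is nilpotent and $\omega(H,u)=\tr(\ad_u)=0$. Nondegeneracy of $\omega$ then forces $H\in\bigl([\G,\G]^\perp\bigr)^\perp=[\G,\G]$, giving $H\in[\G,\G]\cap[\G,\G]^\perp$.

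There is essentially no computational obstacle once the identity $\Ll_u=\tfrac{2}{3}\ad_u$ on $[\G,\G]^\perp$ is in hand; the only judgement call is to pass through \eqref{flatbis} rather than \eqref{flat} to obtain the vanishing of the composition $\ad_v\circ\ad_u$, since \eqref{flat} on its own would only deliver the commutator $[\ad_u,\ad_v]=0$, which is too weak.
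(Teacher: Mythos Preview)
Your proof is correct and follows essentially the same route as the paper's: both rest on the identity $\Ll_u=\tfrac{2}{3}\ad_u$, $\Rr_u=-\tfrac{1}{3}\ad_u$ for $u\in[\G,\G]^\perp$, then feed $u\bullet v=0$ into \eqref{flatbis} to obtain $\ad_v\circ\ad_u=0$, with part~(2) handled identically via $\tr(\ad_{[x,y]})=0$ and nilpotence of $\ad_u$. The one pleasant variation is that you deduce $u\bullet v=0$ from Lie-admissibility (forcing $[u,v]=\tfrac{4}{3}[u,v]$) rather than by the paper's direct expansion of $\omega(u\bullet v,w)$ using \eqref{products} and \eqref{sym}, which is slightly slicker but not a substantively different argument.
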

\begin{proof}
\begin{enumerate}
	\item Let $u,v\in[\G,\G]^\bot$. By virtue of \eqref{products},  one has for any $w\in\G$,
	\begin{eqnarray*}
	\omega(u\bullet v,w)&=&\frac{1}{3}\omega\left([u,v],w\right)+\frac{1}{3}\omega\left([u,w],v\right),\\
	&=&\frac{1}{3}\omega\left([u,v],w\right),\\
	&\stackrel{\eqref{sym}}=&-\frac{1}{3}\omega\left([v,w],u\right)
	-\frac{1}{3}\omega\left([w,u],v\right),\\
	&=&0.
	\end{eqnarray*}
Then $u\bullet v=0$ and $[u,v]=u\bullet v-v\bullet u=0$. 

On the other hand, from \eqref{sym} and the fact that $\ad_u=\mathrm{L}_u-\mathrm{R}_u$, one can deduce easily that $\Ll_u=\frac{2}{3}\ad_u$ and $\Rr_u=-\frac{1}{3}\ad_u$ for any $u\in[\G,\G]^\perp$.  Now, for $u,v\in[\G,\G]^\bot$, and since $u\bullet v=0$, the relation \eqref{flatbis} implies that   $-\frac{1}{9}○\ad_u\circ\ad_v=-\frac{2}{3}\ad_{[v,u]}$, which implies that $\ad_u\circ\ad_v=0$ and in particular $\ad_u^2=0$.
\item For any $u,v\in\G$, one has $\omega\left(H,[u,v]\right)=\tr\left(\ad_{[u,v]}\right)=0$ and hence $H\in[\G,\G]^\bot$. Let $u\in[\G,\G]^\bot$. Since $\ad_u$ is nilpotent then $\omega(H,u)=\tr\left(\ad_u\right)=0$. It follows that $H\in[\G,\G]\cap[\G,\G]^\bot$ as desired.
\end{enumerate}	
\end{proof}

It is known that there is a correspondence between connected and simply connected affine Lie groups and left symmetric algebras. An affine Lie group is a Lie group endowed with a torsion free flat left invariant connection.
A left symmetric algebra is an algebra $(A,.)$ satisfying
\[
\mathrm{ass}(u,v,w)=\mathrm{ass}(v,u,w),
\]
where $\mathrm{ass}(u,v,w)=(u.v).z-u.(v.w)$, for any $u,v,w\in\G$. It is known that 
a left symmetric algebra is Lie-admissible.  Let $G(A)$ be a  Lie group whose Lie algebra is $(A,[\;,\;])$ where $[u,v]=u.v-v.u$ and denote by $u^+$ the left invariant vector field on $G(A)$ associated to $u\in A$.  Then the left invariant connection $\na$ on $G(A)$ given by $\na_{u^+}v^+=(u.v)^+$ is torsion free and has vanishing curvature. Moreover, $\na$ is geodesically complete 
  if and only if $\Rr_u$ is nilpotent for any $u\in A$ \cite{Segal}. This is also equivalent to $\tr(\Rr_u)=0$ for any $u\in A$. It is known that if $\na$  is complete then $(A,[\;,\;])$ is solvable and hence $G(A)$ is solvable \cite{Hel}.
  
  Let $(G,\Om)$ be a flat symplectic Lie group and $(\G,\om)$ its associated symplectic Lie algebra. Then the vanishing of the curvature of $\Om$ is equivalent to $(\G,\bullet)$ is a left symmetric algebra. Since for any $u\in\G$, $\Ll_u$ is skew-symmetric then $\tr(\Rr_u)=-\tr(\ad_u)$ and hence $\na$ is complete if and only if $G$ is unimodular and we get the following result.

\begin{pr}\label{unimodular}
	
	The symplectic connection $\na$ of a flat symplectic Lie group is complete if and only if the group is unimodular and in this case it is solvable. 
	
	\end{pr}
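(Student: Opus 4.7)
The plan is to derive Proposition \ref{unimodular} as a direct corollary of two classical results on left-symmetric algebras, Segal's completeness criterion and Helmstetter's solvability theorem, both cited and recalled in the paragraph immediately preceding the statement, combined with one trace identity specific to the symplectic setting.

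First I would note that since $(\G,\bullet)$ is left-symmetric (by flatness and torsion-freeness of $\na$), Segal's theorem applies and gives: $\na$ is geodesically complete if and only if every right multiplication $\Rr_u$ is nilpotent, equivalently $\tr(\Rr_u)=0$ for every $u\in\G$. Next, relation \eqref{dom} asserts that each $\Ll_u$ is skew-symmetric with respect to $\omega$, which immediately yields $\tr(\Ll_u)=0$. Using the Lie-admissibility identity $\ad_u=\Ll_u-\Rr_u$ and taking traces gives
\[ \tr(\Rr_u)=-\tr(\ad_u). \]
Combining these two facts, $\na$ is complete if and only if $\tr(\ad_u)=0$ for all $u\in\G$, which is exactly the unimodularity of $\G$ (equivalently of $G$, since $G$ is connected and its modular function is controlled by $\tr\circ\ad$).

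For the \emph{solvability} clause under the completeness hypothesis, I would invoke Helmstetter's theorem \cite{Hel}: the Lie algebra underlying a complete left-symmetric algebra is solvable. Since the commutator Lie algebra of $(\G,\bullet)$ is $\G$ itself, solvability of $\G$ and hence of $G$ follows at once. No genuine obstacle arises here: the nontrivial inputs are Segal and Helmstetter, both external and cited, and the only computation to perform is the trace identity $\tr(\Ll_u)=0$, which is immediate from the skew-symmetry of $\Ll_u$ with respect to $\omega$. The proof consists in stringing these three observations together.
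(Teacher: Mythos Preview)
Your proposal is correct and follows exactly the argument the paper gives in the paragraph immediately preceding the proposition: Segal's criterion reduces completeness to $\tr(\Rr_u)=0$, the skew-symmetry of $\Ll_u$ with respect to $\omega$ gives $\tr(\Ll_u)=0$, so $\tr(\Rr_u)=-\tr(\ad_u)$ from $\ad_u=\Ll_u-\Rr_u$, and Helmstetter then yields solvability in the complete case. There is nothing to add or change.
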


\begin{pr}\label{degenerate}
Let $(\G,\omega)$ be a flat symplectic Lie algebra. Then $[\G,\G]$ is degenerate.
\end{pr}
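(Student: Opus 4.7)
\medskip
\noindent\textbf{Proof plan.}
I argue by contradiction. Assume that $[\G,\G]$ is non-degenerate; the case $\G$ abelian is trivial, so suppose $\G$ is not abelian. Then we obtain an $\omega$-orthogonal symplectic direct sum $\G=\mathfrak{a}\oplus\mathfrak{b}$ with $\mathfrak{a}=[\G,\G]^\perp$ and $\mathfrak{b}=[\G,\G]$, and in particular $\mathfrak{a}\cap\mathfrak{b}=\{0\}$. By Proposition~\ref{prbot}(2), $H\in[\G,\G]\cap[\G,\G]^\perp=\{0\}$, so $\G$ is unimodular, and Proposition~\ref{unimodular} ensures it is solvable; hence $\mathfrak{b}$ is a nilpotent ideal. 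Proposition~\ref{prbot}(1) further tells us that $\mathfrak{a}$ is abelian and $\ad_u^2=0$ for every $u\in\mathfrak{a}$.

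The heart of the argument is to promote solvability to nilpotency of $\G$. For $u\in\mathfrak{a}$, $\ad_u$ is nilpotent since $\ad_u^2=0$. For $v\in\mathfrak{b}$, a descending argument $\ad_v^k(\G)\subset\mathfrak{b}_k$ along the lower central series of $\mathfrak{b}$ (using that $\mathfrak{b}$ is an ideal of $\G$) shows that $\ad_v$ is nilpotent on $\G$. Complexifying and invoking Lie's theorem for the solvable Lie algebra $\G_\C$ acting on itself by $\ad$, I obtain a basis of $\G_\C$ in which every $\ad_u$ is upper triangular; the diagonal entries are linear functionals of $u$ that vanish on $\mathfrak{a}_\C$ and on $\mathfrak{b}_\C$, hence on $\G_\C=\mathfrak{a}_\C+\mathfrak{b}_\C$. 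Thus $\ad_u$ is strictly upper triangular, and so nilpotent, for every $u\in\G$, and Engel's theorem yields that $\G$ is nilpotent.

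Once nilpotency is in hand, the contradiction is immediate. A nonzero nilpotent, non-abelian Lie algebra satisfies $Z(\G)\cap[\G,\G]\neq\{0\}$, since the last nonvanishing term of its lower central series sits in that intersection. But by Proposition~\ref{bot}, $Z(\G)\subset[\G,\G]^\perp=\mathfrak{a}$, so $Z(\G)\cap[\G,\G]\subset\mathfrak{a}\cap\mathfrak{b}=\{0\}$, a contradiction.

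The main obstacle is the nilpotency upgrade: although $\ad_a$ and $\ad_b$ are individually nilpotent for $a\in\mathfrak{a}$, $b\in\mathfrak{b}$, their sum $\ad_{a+b}$ is not obviously so. The delicate point is to exploit the simultaneous triangularization from Lie's theorem together with the linearity of the diagonal weights to conclude that $\ad_{a+b}$ is also nilpotent, which then unlocks Engel.
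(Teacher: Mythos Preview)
Your argument is correct, but it takes a different route from the paper's. Both proofs begin identically: assume $[\G,\G]$ nondegenerate, split $\G=[\G,\G]^\perp\oplus[\G,\G]$, use $H=0$ to get unimodularity, hence solvability, hence nilpotency of $[\G,\G]$. From there the paths diverge. The paper does not attempt to prove that $\G$ itself is nilpotent; instead it shows directly that $Z([\G,\G])\subset Z(\G)$ by a short bracket computation using Proposition~\ref{prbot}(1) (in particular $\ad_v\circ\ad_w=0$ for $v,w\in[\G,\G]^\perp$) and the Jacobi identity. Since $Z([\G,\G])\neq\{0\}$ sits inside $[\G,\G]$, while $Z(\G)\subset[\G,\G]^\perp$, the contradiction follows immediately.

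Your approach instead upgrades solvability of $\G$ to nilpotency via Lie's theorem and Engel's theorem, exploiting that the triangularization weights are linear functionals vanishing on both summands. This is a valid and clean argument; in fact it anticipates, in this special situation, the global result (Theorem~\ref{main}) that every flat symplectic Lie algebra is nilpotent. The trade-off is that you invoke heavier structure theory (complexification, Lie, Engel), whereas the paper's proof stays entirely elementary and internal to the bracket identities already established. The paper's route is shorter and self-contained; yours is conceptually interesting because it shows that nondegeneracy of $[\G,\G]$ would already force nilpotency of $\G$, making the contradiction essentially the same as the one used later in Theorem~\ref{main}.
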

\begin{proof}
Suppose in the contrary that $[\G,\G]$ is nondegenerate. Then $\G=[\G,\G]\oplus[\G,\G]^\bot$. Since $H\in[\G,\G]\cap[\G,\G]^\bot$ then $\G$ must be unimodular and hence $\G$ is solvable. Thus $[\G,\G]$ is nilpotent and hence its center $Z([\G,\G])\not=\{0\}$. Let us show that $Z([\G,\G])\subset Z(\G)$ which is a contradiction since, according to Proposition \ref{bot}, $Z(\G)\subset [\G,\G]^\perp$. Indeed, if $u\in Z([\G,\G])$ then for any $v\in [\G,\G]^\perp$, we have for any $w\in[\G,\G]$,
\[ [[u,v],w]=[u,[v,w]]+[v,[w,u]]=0. \]On the other hand, for any $w\in[\G,\G]^\perp$, according to Proposition \ref{prbot}, $\ad_v\circ\ad_w=0$ and hence $[[u,v],w]=0$. Thus $[u,v]\in Z(\G)\subset [\G,\G]^\perp$ and hence $[u,v]=0$ which completes the proof.
\end{proof}

\begin{pr}\label{Lemma2}
	Let $(\G,\omega)$ be a flat symplectic Lie algebra. Then
	 $N^\ell(\G)$ is a two-sided ideal and in particular, it is a Lie ideal.
		Moreover, $[\G, [\G,\G]^\bot ]\subset  N^\ell(\G)$.
	
\end{pr}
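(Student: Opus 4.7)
The plan is to split the statement into the two-sided ideal claim and the commutator inclusion, and to let the flatness identity $\Ll_{[v,u]} = [\Ll_v, \Ll_u]$ carry most of the weight in both parts.

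For the two-sided ideal claim, I begin with the observation that $u \in N^\ell(\G)$ means $\Ll_u = 0$, so $u \bullet v = 0$ for every $v \in \G$; hence $N^\ell(\G) \bullet \G = \{0\}$ essentially for free, giving the right-ideal property. For the left-ideal property, Lie-admissibility gives $v \bullet u = [v, u] + u \bullet v = [v, u]$, so it suffices to show $[v, u] \in N^\ell(\G)$ whenever $u \in N^\ell(\G)$; this is immediate from the flatness identity $\Ll_{[v, u]} = [\Ll_v, \Ll_u] = 0$. Thus $N^\ell(\G)$ is a two-sided ideal, and hence a Lie ideal, since $[\G, I] \subset \G \bullet I + I \bullet \G$ for any two-sided ideal $I$.

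For the inclusion $[\G, [\G,\G]^\bot] \subset N^\ell(\G)$, I fix $u \in [\G,\G]^\bot$ and $v \in \G$. The key intermediate identity I aim to prove is
\[ [\Ll_v, \ad_u] = \Ll_{[v, u]} \qquad (\text{valid when } u \in [\G,\G]^\bot). \]
Once in hand, Proposition \ref{prbot} gives $\Ll_u = \tfrac{2}{3}\ad_u$ on $[\G,\G]^\bot$, so flatness yields $\Ll_{[v, u]} = [\Ll_v, \Ll_u] = \tfrac{2}{3}[\Ll_v, \ad_u] = \tfrac{2}{3}\Ll_{[v, u]}$, forcing $\Ll_{[v, u]} = 0$ and hence $[v, u] \in N^\ell(\G)$.

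The intermediate identity I would establish by testing both sides against an arbitrary $w \in \G$ and pairing with $z \in \G$ under $\omega$. Expanding $3\omega([v, u] \bullet w, z)$ via \eqref{products} and the Jacobi identity produces four bracket-terms; expanding $3\omega\bigl(v \bullet [u, w] - [u, v \bullet w], z\bigr)$ via \eqref{products} and then using $\ad_u^* = -\ad_u$ (which characterizes $[\G,\G]^\bot$ by Proposition \ref{bot}) to move $\ad_u$ across $\omega$ produces the same four terms. The main obstacle is the bookkeeping: cross terms of the form $\omega([v, \cdot], [u, \cdot])$ must be rewritten using $\ad_u$-skewness and Jacobi in the right order to match up; it is precisely the $\omega$-skewness of $\ad_u$ that links the argument to the hypothesis $u \in [\G,\G]^\bot$ and is why the identity fails for a generic $u$.
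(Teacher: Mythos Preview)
Your proof is correct and rests on the same ingredients as the paper's: the formula $\Ll_x=\tfrac13(\ad_x-\ad_x^{*})$, the characterization $\ad_u^{*}=-\ad_u$ for $u\in[\G,\G]^\bot$, and flatness. For the left-ideal part the paper invokes the left-symmetric identity $(v\bullet u)\bullet w - v\bullet(u\bullet w)=(u\bullet v)\bullet w - u\bullet(v\bullet w)$ directly rather than passing through $v\bullet u=[v,u]$ and then $\Ll_{[v,u]}=[\Ll_v,\Ll_u]=0$; the two arguments are equivalent. For the inclusion, your organization via the intermediate identity $[\Ll_v,\ad_u]=\Ll_{[v,u]}$ (valid for $u\in[\G,\G]^\bot$ without flatness) followed by $\Ll_u=\tfrac23\ad_u$ and flatness is a clean repackaging of what the paper does in one shot: it expands $\Ll_{[u,v]}-[\Ll_u,\Ll_v]$ entirely in terms of $\ad$ and $\ad^{*}$, obtaining $\tfrac{2}{9}(\ad_{[u,v]}-\ad_{[u,v]}^{*})+\tfrac19([\ad_u^{*},\ad_v]+[\ad_u,\ad_v^{*}])=0$, and then substitutes $\ad_v^{*}=-\ad_v$ to conclude $\ad_{[u,v]}=\ad_{[u,v]}^{*}$. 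One suggestion: your intermediate identity has a much quicker operator-level proof than the element-and-pairing computation you outline. Writing $[\Ll_v,\ad_u]=\tfrac13\bigl(\ad_{[v,u]}-[\ad_v^{*},\ad_u]\bigr)$ and noting that $[\ad_v^{*},\ad_u]^{*}=[\ad_u^{*},\ad_v]=-[\ad_u,\ad_v]=\ad_{[v,u]}$ gives $[\ad_v^{*},\ad_u]=\ad_{[v,u]}^{*}$ immediately, so $[\Ll_v,\ad_u]=\tfrac13(\ad_{[v,u]}-\ad_{[v,u]}^{*})=\Ll_{[v,u]}$ without ever descending to elements.
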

\begin{proof}
	 It is trivial that $N^\ell(\G)$ is a right ideal. Let us show that it is also a left ideal. Let $u\in N^\ell(\G)$ and $v,w\in\G$. Since $\bullet$ is left symmetric, we have
		\[
		(v\bullet u)\bullet w-v\bullet (u\bullet w)=(u\bullet v)\bullet w-u\bullet (v\bullet w),
		\]
		we deduce that $(v\bullet u)\bullet w=0$ and hence $v\bullet u\in N^\ell(\G)$.

		 Let $u,v\in\G$. From \eqref{products} one has, $3\Ll_u=\ad_u-\ad_u^*$. Thus
		\begin{eqnarray}\nonumber
		0&=&\Ll_{[u,v]}-\left[\Ll_u,\Ll_v\right]\\\nonumber
		&=&\frac{1}{3}\left(\ad_{[u,v]}-\ad^*_{[u,v]}\right)-\frac{1}{9}\left[\ad_u-\ad_u^*,\ad_v-\ad^*_v\right]\\\label{curvature}
		&=&\frac{2}{9}\ad_{[u,v]}-\frac{2}{9}\ad^*_{[u,v]}+\frac{1}{9}\left[\ad_u^*,\ad_v\right]+\frac{1}{9}\left[\ad_u,\ad^*_v\right].
		\end{eqnarray} 
		Now for any $u\in\G$ and $v\in[\G,\G]^\perp$, we have $\ad_v^*=-\ad_v$ and if we replace in the relation above, we get
		\begin{eqnarray*}
		0	&=&\frac{2}{9}\ad_{[u,v]}-\frac{2}{9}\ad^*_{[u,v]}+\frac{1}{9}\left[\ad_u^*,\ad_v\right]+\frac{1}{9}\left[\ad_u,\ad^*_v\right]\\
			&=&\frac{1}{9}\ad_{[u,v]}-\frac{1}{9}\ad^*_{[u,v]}.
		\end{eqnarray*}
		Thus $\ad_{[u,v]}=\ad_{[u,v]}^*$ 	and hence  $[u,v]\in N^\ell(\G)$.		
\end{proof}
The symplectic reduction is an important tool in the study of symplectic Lie algebras (see \cite{cortes}). If $(\G,\om)$ is a symplectic Lie algebra and $I\subset \G$ is a totally isotropic Lie ideal, then $I^\perp/I$ carries a natural symplectic structure called the symplectic reduction of $(\G,\om)$ by $I$. The following proposition shows that the symplectic reduction of a flat symplectic Lie algebra is also flat. A symplectic Lie algebra is called completely reducible if it admits a sequence of symplectic reduction to the trivial symplectic Lie algebra. Otherwise, it is called irreducible.

\begin{pr} Let $(\G,\om)$ be a flat symplectic Lie algebra and $I$ a Lie ideal. Then the symplectic Lie algebra structure on $I^\perp/I\cap I^\perp$ is flat.
	
	\end{pr}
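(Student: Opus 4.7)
The plan is to exhibit the symplectic reduction explicitly, checking compatibility of the $\bullet$-product with the quotient, so that flatness descends automatically. Set $J=I\cap I^\perp$. First I would verify that $J$ is a totally isotropic Lie ideal of the Lie subalgebra $I^\perp$: the fact that $I^\perp$ is a Lie subalgebra and that $I^\perp\bullet I^\perp\subset I^\perp$ is Proposition \ref{ideal}, while the inclusions $I^\perp\bullet I\subset I$ and $I\bullet I^\perp\subset I$ from the same proposition give $I^\perp\bullet J\subset J$ and $J\bullet I^\perp\subset J$. Using $[u,v]=u\bullet v-v\bullet u$, this also makes $J$ a Lie ideal of $I^\perp$.

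Next I would descend $\omega$. For $u,v\in I^\perp$ and $v'\in J\subset I$ one has $\omega(u,v')=0$, so $\bar\omega(\bar u,\bar v):=\omega(u,v)$ is a well-defined skew-symmetric form on $I^\perp/J$. Non-degeneracy follows from $(I^\perp)^\perp=I$: if $\bar\omega(\bar u,\cdot)=0$ on $I^\perp/J$, then $u\in I\cap I^\perp=J$, so $\bar u=0$. The closure identity \eqref{sym} on $\G$ restricts to give $\bar\omega$ closed on the Lie algebra $I^\perp/J$. Hence $(I^\perp/J,\bar\omega)$ is a symplectic Lie algebra.

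The key point is then the identification of products. Let $\bar\bullet$ denote the product on $I^\perp/J$ defined by \eqref{products} applied to $(I^\perp/J,\bar\omega)$. For $u,v,w\in I^\perp$,
\[
\bar\omega(\bar u\,\bar\bullet\,\bar v,\bar w)=\tfrac13\bigl(\bar\omega([\bar u,\bar v],\bar w)+\bar\omega([\bar u,\bar w],\bar v)\bigr)=\tfrac13\bigl(\omega([u,v],w)+\omega([u,w],v)\bigr)=\omega(u\bullet v,w),
\]
and since $u\bullet v\in I^\perp$ we also have $\omega(u\bullet v,w)=\bar\omega(\overline{u\bullet v},\bar w)$. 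Non-degeneracy of $\bar\omega$ thus forces $\bar u\,\bar\bullet\,\bar v=\overline{u\bullet v}$; in other words, the intrinsic product of the reduced symplectic Lie algebra is exactly the quotient of $\bullet$.

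Finally, flatness is equivalent to the left-symmetric identity $\mathrm{ass}(u,v,w)=\mathrm{ass}(v,u,w)$ for the product $\bullet$ on $\G$, which holds on $I^\perp$ in particular. Passing to the quotient by the two-sided $\bullet$-ideal $J$, this identity survives for $\bar\bullet$, so $(I^\perp/J,\bar\omega)$ is flat. The only step that requires genuine care is the identification $\bar u\,\bar\bullet\,\bar v=\overline{u\bullet v}$, which is the main obstacle; all remaining verifications (descent of $\omega$, non-degeneracy, ideal properties of $J$) are routine consequences of Proposition \ref{ideal} together with $(I^\perp)^\perp=I$.
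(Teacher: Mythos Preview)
Your proposal is correct and follows essentially the same route as the paper: use Proposition~\ref{ideal} to see that $I^\perp$ is $\bullet$-stable and that $J=I\cap I^\perp$ is a two-sided $\bullet$-ideal of $I^\perp$, then observe that the descended product coincides with the intrinsic product of the reduced symplectic Lie algebra, so the left-symmetric identity passes to the quotient. You spell out more of the details (descent and non-degeneracy of $\bar\omega$, the explicit identification $\bar u\,\bar\bullet\,\bar v=\overline{u\bullet v}$) than the paper does, but the underlying argument is the same.
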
 
\begin{proof} According to Proposition \ref{ideal}, $I^\perp$ is stable by $\bullet$ and $I\cap I^\perp$ is a two-sided ideal of $I^\perp$. Hence $\bullet$ defines a product $\wi\bullet$ which is the product associated to the symplectic Lie algebra structure on $I^\perp/I\cap I^\perp$.
	\end{proof}
	
	\begin{exem} It is known that the Lie algebra $\mathrm{aff}(n,\R)$ of the affine group has a symplectic Lie algebra structure (see \cite{bordeman}). Moreover, Cort\'es in \cite{cortes} showed that $\mathrm{aff}(n,\R)$ has a symplectic reduction to $\mathrm{aff}(n-1,\R)$. It is easy to check that the symplectic structure of $\mathrm{aff}(1,\R)$ is not flat (see example \ref{dim2}). Then the symplectic structure of 
		$\mathrm{aff}(n,\R)$ is not flat. 
		
		\end{exem}

		\begin{pr} A flat symplectic Lie algebra is completely reducible.

			\end{pr}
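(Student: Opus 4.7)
My plan is induction on $\dim\G$, the base case $\dim\G=0$ being vacuous. For the inductive step, the goal is to exhibit a nonzero totally isotropic Lie ideal $I$ of $\G$; the preceding proposition then shows that $I^\perp/I$ is again a flat symplectic Lie algebra of strictly smaller dimension, and the inductive hypothesis supplies a sequence of reductions terminating at $\{0\}$, so prepending the step $\G \rightsquigarrow I^\perp/I$ gives the required sequence for $\G$.

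To construct $I$: if $\G$ is abelian, any one-dimensional subspace is a totally isotropic Lie ideal. If $\G$ is non-abelian, the natural candidate is $K=[\G,\G]\cap[\G,\G]^\perp$, which is nonzero by Proposition \ref{degenerate} and automatically totally isotropic (since $K\subset[\G,\G]^\perp$). To verify $K$ is a Lie ideal, observe first that $[\G,K]\subset[\G,\G]$ trivially, and by Proposition \ref{Lemma2} we have $[\G,K]\subset[\G,[\G,\G]^\perp]\subset N^\ell(\G)$. By Proposition \ref{bot}, $N^\ell(\G)\cap[\G,\G]^\perp=Z(\G)$, so verifying $[\G,K]\subset K$ reduces to verifying $[\G,K]\subset Z(\G)$. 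This is where flatness should enter crucially: using the curvature identity $\ad_{[x,u]}=[\ad_x,\ad_u]$ combined with Proposition \ref{prbot} (which tells us $\ad_u$ is skew-adjoint and squares to zero for $u\in K\subset[\G,\G]^\perp$, and that $[\G,\G]^\perp$ is abelian), one aims to show $\ad_{[x,u]}$ vanishes identically on $\G$.

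I expect the Lie-ideal verification for $K$ to be the main technical obstacle. A safer fallback is the generalized form of the preceding proposition, which applies to \emph{any} Lie ideal $I$ (not merely totally isotropic ones): applied to $I=[\G,\G]$ it produces the flat symplectic Lie algebra $[\G,\G]^\perp/K$ of strictly smaller dimension with no Lie-ideal check required. Helmstetter's theorem (already cited in the discussion preceding Proposition \ref{unimodular}) rules out $\G$ being semisimple, so nonzero proper Lie ideals always exist, and the induction closes regardless of which interpretation of "symplectic reduction" one adopts.
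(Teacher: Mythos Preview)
The paper supplies no proof for this proposition; it is stated at the end of Section~\ref{section2} and the justification is the material of Section~\ref{section4}. The argument the authors have in mind is: by Lemma~\ref{center} the center $Z(\G)$ is nontrivial, and any nonzero $e\in Z(\G)$ spans a one-dimensional (hence automatically totally isotropic) Lie ideal $\mathbb{R}e$; the preceding proposition then shows $(\mathbb{R}e)^\perp/\mathbb{R}e$ is again flat symplectic, and induction on $\dim\G$ finishes. Equivalently, Theorem~\ref{th double} exhibits $\G$ as an iterated double extension from $\{0\}$, and each double-extension step is undone by reducing along the central line $\mathbb{R}e$.

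Your proposal has two genuine gaps. First, you do not establish that $K=[\G,\G]\cap[\G,\G]^\perp$ is a Lie ideal. Your ``reduction'' to $[\G,K]\subset Z(\G)$ is in fact an \emph{equivalence}, not a simplification: since $[\G,K]\subset N^\ell(\G)$ by Proposition~\ref{Lemma2} and $Z(\G)=N^\ell(\G)\cap[\G,\G]^\perp$ by Proposition~\ref{bot}, the inclusion $[\G,K]\subset[\G,\G]^\perp$ holds if and only if $[\G,K]\subset Z(\G)$, and you prove neither. The identity you call ``the curvature identity $\ad_{[x,u]}=[\ad_x,\ad_u]$'' is simply the Jacobi identity, valid in every Lie algebra; it carries no information about flatness of $\bullet$ (which is the condition $\Ll_{[x,u]}=[\Ll_x,\Ll_u]$), so the sketch built on it cannot exploit the hypothesis. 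Second, your fallback does not prove the proposition as stated: symplectic reduction and ``completely reducible'' are defined in the paper for \emph{totally isotropic} Lie ideals, so passing to $[\G,\G]^\perp/K$ via the general ideal $I=[\G,\G]$ is not a symplectic reduction of $\G$ in the required sense, and a reduction sequence for that quotient does not lift to one for $\G$. The clean fix is simply to use the center, which is exactly what the paper's later results provide.
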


\section{Double extension of flat symplectic Lie algebras}\label{section3}
The double extension method constitute a powerful tool to study Lie groups endowed with left-invariant structures (see for instance \cite{Aubert, RevoyMedina, DardieMedina}). In \cite{Aubert}, the authors developed the double extension method to study Lie groups endowed with flat left-invariant pseudo-Riemannian metrics. In \cite{colombien}, the author adapted this process to study Lie groups endowed by flat symplectic connection. In this section, we adapt this method to our cennection given by \eqref{products}, and we will show in the next section, that all flat symplectic Lie algebras are obtained by this method.

Let $(B,\omega_B)$ be a flat symplectic Lie algebra. We denote by $ab$ the product defined by \eqref{products} for any $a,b\in B$. Let $\xi\in \End(B)$ and $b_0\in B$ such that for any $a,b\in B$
\begin{eqnarray}\label{eq1}
\left[\xi,\xi^*\right]&=&\xi^2-\frac{1}{3}\Rr_{b_0},\\\label{eq2}
b_0&\in&\ker\left(\xi^*-\xi\right),\\\label{eq3}
\xi^*\circ\xi&=&\frac{1}{3}\left(\Rr_{b_0}+\Rr_{b_0}^*\right),\\\label{eq4}
\xi\left([a,b]\right)&=&a\xi(b)-b\xi(a),\\\label{eq5}
\xi^*(ab)-\xi^*(a)b-a\xi^*(b)&=&\xi(ab)-a\xi(b)-2\xi(a)b.	
\end{eqnarray}
A couple $(\xi,b_0)\in \End(B)\times B$ which satisfies the equations \eqref{eq1}-\eqref{eq5} is called {\it admissible}. Let $\G$ be the vector space  defined by $\G=\mathbb{R}e\oplus B\oplus\mathbb{R}\bar{e}$ endowed with the non-degenerate skew-symmetric form $\omega$ defined by $\omega_{/B\times B}=\omega_B$, $\omega(e,e)=\omega(\bar{e},\bar{e})=\omega(e,B)=\omega(\bar{e},B)=0$ and $\omega(e,\bar{e})=1$. We define also in $\G$ the Lie brackets 
\begin{equation}\label{Liebrackets}
\left[\bar{e},a\right]=\left(\xi^*-2\xi\right)(a)+\omega_B(b_0,a)e\esp [a,b]=[a,b]_B+\omega_B\left((\xi+\xi^*)(a),b\right)e.
\end{equation}
Let us show that $(\G,\omega)$ is also a flat symplectic Lie algebra. First, one can show easily that $\om$ satisfies \eqref{sym}.
Thus, the product defined by \eqref{products} satisfies $[u,v]=u\bullet v-v\bullet u$ for any $u,v\in\G$. Let us show that this product is left symmetric. From \eqref{products}, one has, for any $a,b\in B$,
\begin{eqnarray*}
\Ll_e&=&\Rr_e=0,\\
\bar{e}\bullet a&=&\left(\xi^*-\xi\right)(a)+\frac{1}{3}\omega_B(b_0,a)e,\\
a\bullet\bar{e}&=&\xi(a)-\frac{2}{3}\omega_B(b_0,a)e,\\
a\bullet b&=&ab+\omega_B\left(\xi(a),b\right)e,\\
\bar{e}\bullet\bar{e}&=&\frac{1}{3}b_0,	
\end{eqnarray*}
where the product of $a,b\in B$ is denoted in $B$ by $ab$ and in $\G$ by $a\bullet b$. Since $\Ll_e=\Rr_e=0$ then $\ass(e,u,v)=\ass(u,e,v)=\ass(u,v,e)=0$ for any $u,v\in\G$. Let $a,b,c\in B$. The equation $\ass(\bar{e},a,\bar{e})=\ass(a,\bar{e},\bar{e})$ is equivalent to \eqref{eq1} and \eqref{eq2}. The equation $\ass(\bar{e},a,b)=\ass(a,\bar{e},b)$ is equivalent to \eqref{eq1} and \eqref{eq5}. The equation $\ass(a,b,\bar{e})=\ass(b,a,\bar{e})$ is equivalent to \eqref{eq3} and \eqref{eq4}. Finally, The equation $\ass(a,b,c)=\ass(b,a,c)$ is equivalent to \eqref{eq4} and to the fact that $(B,\omega_B)$ is a flat symplectic Lie algebra. In summary, if $(B,\omega_B)$ is a flat symplectic Lie algebra of dimension $2n$ and $(\xi,b_0)\in \End(B)\times B$ such that \eqref{eq1}-\eqref{eq5} are satisfied, then $(\G,\omega)$ is a flat symplectic Lie algebra of dimension $2n+2$.
\begin{Def}
The flat symplectic Lie algebra $(\G,\omega)$ constructed as above is called the  double extension of $(B,\omega_B)$ by means of $\xi$ and $b_0$.  	
\end{Def}
Conversely, we have the following result.
\begin{pr}\label{inversede}
Let $(\G,\omega)$ be a flat symplectic Lie algebra which admits a one-dimensional two-sided ideal $I$ (with respect to the product given by \eqref{products}) such that $I^\bot$ is also a two-sided ideal. Then $(\G,\omega)$ is a double extension of another flat symplectic Lie algebra of dimension $\dim\G-2$ by means of $\xi$ and $b_0$.  		
\end{pr}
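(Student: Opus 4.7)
The plan is to build a Witt-type decomposition $\G=\R e\oplus B\oplus\R\bar{e}$ adapted to $I$, to show that $e$ is central in $\G$ (the heart of the argument), to identify the endomorphism $\xi$ and the vector $b_0$ from the $\bullet$-products involving $\bar{e}$, and finally to read off the admissibility relations \eqref{eq1}--\eqref{eq5} from the left-symmetry of $\bullet$ on suitable triples.

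Fix a generator $e$ of $I$. Since $\dim I=1$, we have $I\subset I^\perp$, so one can pick $\bar{e}\in\G$ with $\omega(e,\bar{e})=1$ and set $B:=I^\perp\cap\bar{e}^\perp$. Then $\dim B=\dim\G-2$, $\omega_B:=\omega|_{B\times B}$ is non-degenerate, and $\G=\R e\oplus B\oplus\R\bar{e}$ realizes the symplectic orthogonality pattern of the double-extension Ansatz.

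The crux is to show $e\in Z(\G)$. Since $I$ is a two-sided $\bullet$-ideal, both $\Ll_e$ and $\Rr_e$ map $\G$ into $\R e$, so we may write $\Ll_e(u)=\phi(u)e$ and $\Rr_e(u)=\psi(u)e$ with $\phi,\psi\in\G^*$. The skew-adjointness relation \eqref{dom} applied to $\Ll_e$ immediately gives $\phi|_{I^\perp}=0$; substituting this into \eqref{L} and evaluating at the pair $(u,v)=(\bar{e},b)$ with $b\in B$ yields $\psi|_{I^\perp}=0$. The flatness identity \eqref{flat} with $v=e$ collapses to $\phi(u\bullet v)=\phi(u)\phi(v)$, so $\phi$ is a multiplicative character of $(\G,\bullet)$; evaluated at $u=v=\bar{e}$ this reads $\gamma\phi(\bar{e})=\phi(\bar{e})^2$, where $\gamma$ denotes the $\bar{e}$-component of $\bar{e}\bullet\bar{e}$. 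A short use of \eqref{dom} on $\omega(\bar{e}\bullet\bar{e},e)$ gives $\gamma=-\psi(\bar{e})$, and applying $3\Ll_e=\ad_e-\ad_e^*$ to $\bar{e}$ (together with $\phi|_B=\psi|_B=0$, which forces the $\omega$-duals $H_\phi$ and $H_\psi$ into $\R e$) yields $\phi(\bar{e})=-2\psi(\bar{e})$. Combining these three relations forces $\phi(\bar{e})=\psi(\bar{e})=0$, hence $\phi=\psi=0$ and $e\in Z(\G)$.

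With $e$ central, the formula $\gamma=-\psi(\bar{e})=0$ gives $\bar{e}\bullet\bar{e}\in I^\perp$; writing $\bar{e}\bullet\bar{e}=\alpha e+\frac{1}{3}b_0$ with $b_0\in B$, the identity $3\Ll_{\bar{e}}(\bar{e})=-\ad_{\bar{e}}^*(\bar{e})$ combined with the computation $\ad_{\bar{e}}^*(\bar{e})=-b_0$ (obtained from \eqref{sym} and \eqref{dom}) yields $\alpha=0$. Define $\xi:B\to B$ as the $B$-component of $a\bullet\bar{e}$. Systematic use of \eqref{dom} then pins down the remaining product components and produces the double-extension formulas $a\bullet\bar{e}=\xi(a)-\frac{2}{3}\omega_B(b_0,a)e$, $\bar{e}\bullet a=(\xi^*-\xi)(a)+\frac{1}{3}\omega_B(b_0,a)e$, and $a\bullet b=(a\cdot b)+\omega_B(\xi(a),b)e$, where $a\cdot b$ denotes the $B$-component of $a\bullet b$. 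Since $e$ is central, the Jacobi identity, \eqref{sym} and left-symmetry restrict cleanly to $B$, so $(B,\omega_B,\cdot)$ is a flat symplectic Lie algebra of dimension $\dim\G-2$. The admissibility conditions \eqref{eq1}--\eqref{eq5} are then obtained by expanding the associator identities $\ass(\bar{e},a,\bar{e})=\ass(a,\bar{e},\bar{e})$ (giving \eqref{eq1} and \eqref{eq2}), $\ass(a,b,\bar{e})=\ass(b,a,\bar{e})$ (giving \eqref{eq3} and \eqref{eq4}), and $\ass(\bar{e},a,b)=\ass(a,\bar{e},b)$ (giving \eqref{eq5}). The main obstacle is the centrality step, which requires juggling \eqref{dom}, \eqref{L}, and flatness simultaneously to pin down both $\phi$ and $\psi$; once $e$ is central, everything thereafter is linear-algebraic bookkeeping dictated by \eqref{dom} and left-symmetry.
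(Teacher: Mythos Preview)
Your argument is correct and follows the same overall architecture as the paper's proof: build the Witt decomposition $\G=\R e\oplus B\oplus\R\bar{e}$, establish that $e$ is central, read off $\xi$ and $b_0$ from the products with $\bar{e}$, and verify admissibility via associators on the triples $(\bar{e},a,\bar{e})$, $(a,b,\bar{e})$, $(\bar{e},a,b)$. There are, however, two tactical differences worth recording.

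For the centrality of $e$, the paper writes $[\bar{e},e]=\lambda e$, computes $e\bullet\bar{e}=-\tfrac{2}{3}\lambda e$, $\bar{e}\bullet e=\tfrac{\lambda}{3}e$, $\bar{e}\bullet\bar{e}=\tfrac{1}{3}b_0+\tfrac{\lambda}{3}\bar{e}$, and then expands the single associator identity $\ass(e,\bar{e},\bar{e})=\ass(\bar{e},e,\bar{e})$ to force $\lambda=0$. Your route---extracting the multiplicativity $\phi(u\bullet v)=\phi(u)\phi(v)$ from flatness and combining it with $\gamma=-\psi(\bar{e})$ and $\phi(\bar{e})=-2\psi(\bar{e})$---is a different and slightly more conceptual path to the same conclusion. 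For the identification of $\xi$, the paper appeals to a cohomological isomorphism $H^2_{SG}(B',\R)\cong H^1_L(B',B')$ from \cite{Aubert} to represent the $e$-component $f(a,b)$ of $a\bullet b$ as $\omega_B(\xi(a),b)$; you instead \emph{define} $\xi(a)$ directly as the $B$-component of $a\bullet\bar{e}$ and recover the same formula from \eqref{dom}. Your approach is more economical here, since non-degeneracy of $\omega_B$ alone suffices; the cocycle condition on $f$ then reappears as \eqref{eq4}.

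One remark on exposition: your justification of $\psi|_{I^\perp}=0$ via ``\eqref{L} at the pair $(\bar{e},b)$'' is obscurely phrased. A clean version is: for $b\in I^\perp$ one has $\ad_e^*(b)=0$ (because $[e,\G]\subset\R e$ and $b\perp e$), so the identity $3\Ll_e(b)=\ad_e(b)-\ad_e^*(b)$ reads $3\phi(b)e=(\phi(b)-\psi(b))e$, whence $\psi(b)=-2\phi(b)=0$. Alternatively one may use the hypothesis on $I^\perp$ directly: $u\bullet\bar{e}\in I^\perp$ for $u\in I^\perp$, so $\psi(u)=\omega(u\bullet e,\bar{e})=-\omega(e,u\bullet\bar{e})=0$.
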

\begin{proof}
Let $(\G,\omega)$ be a flat symplectic Lie algebra which admits a one-dimensional two-sided ideal $I$. Since $I^\bot$ is also a two-sided ideal, then $I^\bot/I$ can be endowed by a flat symplectic structure. In fact, for any $\bar{x},\bar{y}\in I^\bot/I$, we put $\bar{\omega}(\bar{x},\bar{y})=\omega(x,y)$. Then $\bar{\omega}$ is a symplectic structure on $I^\bot/I$. Furthermore, the product defined on $I^\bot/I$ by 
\[
\bar{\omega}\left(\bar{x}\bar{y},\bar{z}\right)=\frac{1}{3}\left(\bar{\omega}\left([\bar{x},\bar{y}],\bar{z}\right)+\bar{\omega}\left([\bar{x},\bar{z}],\bar{y}\right)\right),
\]
is left symmetric. Thus $\left(I^\bot/I,\bar{\omega}\right)$ is a flat symplectic Lie algebra. We put $I=\mathbb{R}e$ and let $\bar{e}\in\G$ such that $\omega(e,\bar{e})=1$. Let $B'=\{e,\bar{e}\}^\bot$. Then $\G=\mathbb{R}e\oplus B'\oplus\mathbb{R}\bar{e}$ where $I^\bot=\mathbb{R}e\oplus B'$. Since $I^\bot$ is a two-sided ideal, then for any $a,b\in B'$ one has 
\[
(\alpha e+a)\bullet(\beta e+b)=f(a,b)e+a\star b,
\]
where $a\star b$ is the component of $a\bullet b$ over $B'$. The product on $I^\bot$ is left symmetric is equivalent to the fact that $(B',\star)$ is a left symmetric algebra and 
\begin{equation}\label{2cocycle}
f\left([a,b]_{B'},c\right)=f(a,b\star c)-f(b,a\star c),\mbox{ for any }a,b,c\in B'.
\end{equation}
The equation \eqref{2cocycle} is equivalent to the fact that $f$ is a scalar 2-cocycle for the left symmetric algebra $(B',\star)$ (see \cite{Nijenhuis} \cite{Aubert}). Let $H^2_{SG}(B',\mathbb{R})$ be the second space of scalar cohomology of the left symmetric algebra $B'$ and $H^1_L(B',B')$ the first space of cohomology of the Lie algebra $B'$ with respect to the representation $\Ll$. It is proved in \cite{Aubert} that $H^2_{SG}(B',\mathbb{R})$ is isomorphic to $H^1_L(B',B')$ via the equation 
\[
f(a,b)=\omega\left(\xi(a),b\right) \mbox{ for any }a,b\in B',
\]
where $\xi:B'\longrightarrow B'$ such that $\xi\left([a,b]_{B'}\right)=a\star\xi(b)-b\star\xi(a)$. On the other hand, the application $\phi:B'\longrightarrow I^\bot/I$ defined by $\phi(a)=\bar{a}$ for any $a\in B'$ is an isomorphism of left symmetric algebra. Thus, we can identify $B'$ with $B=I^\bot/I$ and hence $\G=\mathbb{R}e\oplus B\oplus\mathbb{R}\bar{e}$ where $(B,\omega_B)=\left(I^\bot/I,\bar{\omega}\right)$ is a flat symplectic Lie algebra. The product on $I^\bot=\mathbb{R}e\oplus B$ is given by 
\[
(\alpha e+a)\bullet(\beta e+b)=\omega_B\left(\xi(a),b\right)e+ab.
\] 
Thus, the Lie brackets on $\G$ are given by, for any $a,b\in B$
\[
\left[\bar{e},e\right]=\lambda e,\ \left[\bar{e},a\right]=D(a)+\omega_B(b_0,a)e\mbox{ and }[a,b]=[a,b]_B+\omega_B\left((\xi+\xi^*)(a),b\right)e,
\]
where $D\in \End(B)$, $\lambda\in\mathbb{R}$ and $b_0\in B$. We have $e\bullet\bar{e}=-\frac{2}{3}\lambda e$, $\bar{e}\bullet e=\frac{\lambda}{3}e$ and $\bar{e}\bullet\bar{e}=\frac{1}{3}b_0+\frac{\lambda}{3}\bar{e}$. Since $(\G,\omega)$ is flat then 
\[
(e\bullet\bar{e})\bullet\bar{e}-e\bullet(\bar{e}\bullet\bar{e})=(\bar{e}\bullet e)\bullet\bar{e}-\bar{e}\bullet(e\bullet\bar{e}),
\]
which implies that $\lambda=0$. From, 
\[
\omega(a\bullet b,\bar{e})=\frac{1}{3}\omega_B\left(\xi+\xi^*-D(a),b\right),
\]
and $\omega(a\bullet b,\bar{e})=\omega_B\left(\xi(a),b\right)$, one can deduce that $D=\xi^*-2\xi$. Therefore, the Lie brackets in $\G=\mathbb{R}e\oplus B\oplus\mathbb{R}\bar{e}$ are reduced to 
\[
\left[\bar{e},a\right]=\left(\xi^*-2\xi\right)(a)+\omega_B(b_0,a)e\mbox{ and }[a,b]=[a,b]_B+\omega_B\left((\xi+\xi^*)(a),b\right)e.
\]
As in the previous paragraph, one can show that $(\G,\omega)$ is a flat symplectic Lie algebra if and only if \eqref{eq1}-\eqref{eq5} hold. Thus $(\G,\omega)$ is a double extension of a flat symplectic Lie algebra $(B,\omega_B)$ by means of $\xi$ and $b_0$.
\end{proof}
\begin{rem}
	\begin{itemize}
		\item As we have shown in Proposition \ref{inversede}, if $I=\mathbb{R}e$ is a two-sided ideal of a flat symplectic Lie algebra, then $e\in Z(\G)$.
		\item If $(\G,\omega)$ is a double extension of $(B,\omega_B)$ by means of $\xi$ and $b_0$, then $\xi^*-2\xi$ is a derivation of the Lie algebra $B$. Indeed, according to \eqref{eq5}, one can check that for any $a,b\in B$,
		\[
		\xi^*-2\xi\left([a,b]\right)=\left[\xi^*-2\xi(a),b\right]+\left[a,\xi^*-2\xi(b)\right],
		\]
	\end{itemize}
\end{rem}
\begin{pr}\label{nipotentinverse}
	Let $(B,\omega_B)$ be a flat symplectic Lie algebra and $(\G,\omega)$ the double extension of $(B,\omega_B)$ by means of $(\xi,b_0)$. If $B$ is nilpotent then $\G$ is nilpotent.
\end{pr}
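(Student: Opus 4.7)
My plan is to first reduce the problem to showing that the operator $\mathcal{D}:=\xi^{*}-2\xi$ is nilpotent on $B$, and then to handle that claim by structural arguments on the admissibility equations. First, I would observe that $\mathbb{R}e$ is a central ideal of $\G$: this was effectively proved inside Proposition~\ref{inversede}, where the identity $\ass(e,\bar e,\bar e)=\ass(\bar e,e,\bar e)$ forced $\lambda=0$ (so $[\bar e,e]=0$), and no other bracket in \eqref{Liebrackets} involves $e$. Since a central extension of a nilpotent Lie algebra is nilpotent, it suffices to prove that $\overline\G:=\G/\mathbb{R}e$ is nilpotent. Reducing \eqref{Liebrackets} modulo $\mathbb{R}e$ identifies $\overline\G$ with the semi-direct product $\mathbb R\bar e\ltimes_{\mathcal D}B$, with $\mathcal D$ a derivation of $B$ by the remark after Proposition~\ref{inversede}. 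A standard Engel-type induction on the lower central series shows that such a semi-direct product is nilpotent if and only if both $B$ and $\mathcal D$ are, so the task is reduced to proving that $\mathcal D$ is a nilpotent endomorphism of $B$.

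Next, I would extract information about $\mathcal D$ from the admissibility equations. Because $B$ is nilpotent and flat symplectic, Proposition~\ref{unimodular} combined with Segal's theorem yields that every right multiplication $\Rr^B_b$ is nilpotent; in particular $\Rr_{b_{0}}$ and its $\omega_B$-adjoint $\Rr_{b_{0}}^{*}$ are nilpotent. Then \eqref{eq1} and \eqref{eq3} can be rewritten as
\[
\xi^{*}\xi=\tfrac13\bigl(\Rr_{b_{0}}+\Rr_{b_{0}}^{*}\bigr),\qquad \xi\xi^{*}=\xi^{2}+\tfrac13\Rr_{b_{0}}^{*}.
\]
When $B$ is abelian, $\Rr_{b_{0}}=0=\Rr_{b_{0}}^{*}$, so $\xi^{*}\xi=0$ and $\xi\xi^{*}=\xi^{2}$; taking $\omega_B$-adjoints gives $\xi^{*2}=\xi^{2}$ and a direct computation yields $\mathcal D^{k}=3(-2)^{k-2}\xi^{k}$ for $k\ge 2$, while any nonzero eigenvalue $\lambda$ of $\xi$ with eigenvector $v$ would satisfy $\lambda^{2}v=\xi^{2}v=\xi\xi^{*}v=0$, forcing $\xi$ nilpotent and hence $\mathcal D$ nilpotent. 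This settles the abelian base case.

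For the general inductive step I would use that $B$ itself, being a flat symplectic nilpotent Lie algebra, contains a one-dimensional totally isotropic two-sided ideal $J$ whose orthogonal $J^{\perp}$ is again two-sided (the existence follows from Proposition~\ref{bot}, which places $Z(B)\subset [B,B]^{\perp}$, together with the fact that $Z(B)\ne\{0\}$ for nilpotent $B$). By Proposition~\ref{inversede} applied to $B$, this presents $B$ as a double extension of the smaller nilpotent flat symplectic algebra $J^{\perp}/J$. The plan is to show that the admissible pair $(\xi,b_{0})$ descends to admissible data on $J^{\perp}/J$, invoke the inductive hypothesis to see that the induced derivation is nilpotent on the reduction, and then lift this back via a Fitting-decomposition argument along the chain $0\subset J\subset J^{\perp}\subset B$. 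The main obstacle is precisely this last step: a generic derivation of a nilpotent Lie algebra need not be nilpotent, so one must exploit the full set of admissibility constraints \eqref{eq1}-\eqref{eq5} in an essential way---in particular the interplay between $\xi,\xi^{*}$ and the nilpotent operators $\Rr_{b_{0}},\Rr_{b_{0}}^{*}$---to prevent $\mathcal D$ from acquiring a nontrivial semisimple part.
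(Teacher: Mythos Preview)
Your reduction to proving that $\mathcal D=\xi^{*}-2\xi$ is nilpotent is correct and is exactly what the paper does. However, your treatment of the non-abelian case is not a proof: you outline an induction via symplectic reduction of $B$ along a central isotropic line $J$, but you yourself flag the lifting step as ``the main obstacle'' and leave it open. There is in fact a concrete obstruction to the descent you propose: the admissibility relations only give $\xi(Z(B))\subset Z(B)$ (from \eqref{eq4} with $a\in Z(B)$ one gets $R_{\xi(a)}=0$, hence $\xi(a)\in Z(B)$ by Proposition~\ref{bot}), not $\xi(J)\subset J$ for a chosen one-dimensional $J\subset Z(B)$. So there is no reason for $(\xi,b_{0})$ to descend to admissible data on $J^{\perp}/J$, and the induction does not get off the ground.

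The paper bypasses this entirely with a direct trace argument that you are missing. First, from \eqref{eq4} one proves by induction that $\tr(\xi^{k}\circ\Rr_{a})=\tr(\Rr_{\xi^{k}(a)})$ for all $k\ge1$ and $a\in B$. Second, from \eqref{eq1} one has $[\xi,\mathcal D]=\xi^{2}-\tfrac13\Rr_{b_{0}}$, and inductively
\[
[\xi^{k},\mathcal D]=k\,\xi^{k+1}-\tfrac13\sum_{p=0}^{k-1}\xi^{p}\circ\Rr_{b_{0}}\circ\xi^{k-1-p}.
\]
Taking traces gives $k\,\tr(\xi^{k+1})=\tfrac{k}{3}\tr(\Rr_{b_{0}}\circ\xi^{k-1})=\tfrac{k}{3}\tr(\Rr_{\xi^{k-1}(b_{0})})$. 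Since $B$ is nilpotent it is unimodular, so $\tr(\Rr_{a})=0$ for every $a\in B$; hence $\tr(\xi^{m})=0$ for all $m\ge2$, and $\xi$ is nilpotent. Finally, combining \eqref{eq1} and \eqref{eq3} yields the identity $\mathcal D^{2}=-3\mathcal D\circ\xi-3\xi^{2}$, which holds for \emph{arbitrary} $B$ (your formula $\mathcal D^{k}=3(-2)^{k-2}\xi^{k}$ is only the abelian specialisation). An easy induction then gives $\mathcal D^{k}=a_{k}\,\mathcal D\circ\xi^{k-1}+b_{k}\,\xi^{k}$ for scalars $a_{k},b_{k}$, so $\mathcal D$ is nilpotent once $\xi$ is. This trace computation is the missing key idea.
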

\begin{proof}
Let $(B,\omega_B)$ be a flat symplectic Lie algebra and let $(\xi,b_0)$ be an admissible couple of $\End(B)\times B$. Let us show that, for any $k\in\mathbb{N}^*$, and $a\in B$,
\begin{equation}\label{ee1}
\tr\left(\xi^k\circ\Rr_a\right)=\tr\left(\Rr_{\xi^k(a)}\right).
\end{equation} 
Indeed, for any $a,b\in B$, $\xi\left([b,a]\right)=b.\xi(a)-a.\xi(b)$, thus $\xi\circ\Rr_a-\xi\circ\Ll_a=\Rr_{\xi(a)}-\Ll_a\circ\xi$, and hence $\tr\left(\xi\circ\Rr_a\right)=\tr\left(\Rr_{\xi(a)}\right)$. Then the equation is true for $k=1$. We suppose the property is true for $k$ and let us show that it is also correct for $k+1$. We have
\[
\xi^{k+1}\left([a,b]\right)=\xi^{k+1}\circ\Ll_a(b)-\xi^{k+1}\circ\Rr_a(b).
\] 
On the other hand,
\[
\xi^{k+1}\left([a,b]\right)=\xi^{k}\circ\xi\left([a,b]\right)=\xi^{k}\left(a.\xi(b)-b.\xi(a)\right)=\xi^{k}\circ\Ll_a\circ\xi(b)-\xi^{k}\circ\Rr_{\xi(a)}(b).
\] 
It follows that $\tr\left(\xi^{k+1}\circ\Rr_a\right)=\tr\left(\xi^k\circ\Rr_{\xi(a)}\right)=\tr\left(\Rr_{\xi^{k+1}(a)}\right)$.\\
We put $D=\xi^*-2\xi$. From \eqref{eq1}, one has $[\xi,D]=\xi^2-\frac{1}{3}\Rr_{b_0}$. By induction, one can show that for any $k\in\mathbb{N}^*$,
\[
\left[\xi^k,D\right]=k\xi^{k+1}-\frac{1}{3}\sum_{p=0}^{k-1}\xi^p\circ\Rr_{b_0}\circ\xi^{k-1-p}.
\]
Thus $\tr\left(\xi^{k+1}\right)=\frac{1}{3}\tr\left(\Rr_{b_0}\circ\xi^{k-1}\right)$, and from \eqref{ee1}, one has $\tr\left(\xi^{k+1}\right)=\frac{1}{3}\tr\left(\Rr_{\xi^{k-1}(b_0)}\right)$, for any $k\in\mathbb{N}^*$. If $B$ is nilpotent then $B$ is a complete left symmetric algebra which implies that $\tr\left(\Rr_a\right)=0$ for any $a\in B$. It follows that  $\tr\left(\xi^{k+1}\right)=0$ for any $k\in\mathbb{N}^*$ and hence $\xi$ is nilpotent. From \eqref{eq1} and \eqref{eq3}, one can deduce that $\xi\circ\xi^*=\xi^2+\frac{1}{3}\Rr_{b_0}^*$ and $\left(\xi^*\right)^2=\left[\xi,\xi^*\right]+\frac{1}{3}\Rr_{b_0}^*$. We replace in the equation $D^2=4\xi^2+\left(\xi^*\right)^2-2\xi\circ\xi^*-2\xi^*\circ\xi$ and we get $D^2=3\xi^2-3\xi^*\circ\xi=-3D\circ\xi-3\xi^2$. By induction, one can deduce that for any $k\geq2$,
\[
D^k=a_kD\circ\xi^{k-1}+b_k\xi^k,
\] 
where $a_k,b_k\in\mathbb{R}$. Since $\xi$ is nilpotent, then $D$ is nilpotent. Now, if $(\G,\omega)$ is a double extension of $(B,\omega_B)$ then the Lie brackets in $\G$ is given by 
\[
[\bar{e},a]=D(a)+\omega_B\left(b_0,a\right)_Be,\ [a,b]=[a,b]_B+\omega_B\left(\xi+\xi^*(a),b\right)e.
\]
Since $B$ is nilpotent then $D$ is nilpotent and hence $\G$ is nilpotent. This completes the proof of the proposition.
\end{proof}
\section{Flat symplectic Lie groups are nilpotent}\label{section4}
In this section, we show that any flat symplectic Lie group is nilpotent. We show also that its Lie algebra is obtained by a sequence of double extension of flat symplectic Lie algebras starting from $\{0\}$.
\begin{Le}\label{solv center}
	Let $(\G,\omega)$ be a flat symplectic Lie algebra. If $\G$ is solvable, then its center is not trivial.
\end{Le}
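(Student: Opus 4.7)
The plan is by contradiction: assume $\G$ is a nontrivial solvable flat symplectic Lie algebra with $Z(\G)=\{0\}$. By Proposition \ref{bot}, this is equivalent to $N^{\ell}(\G)\cap[\G,\G]^{\perp}=\{0\}$ (and to $\G\bullet\G=\G$). If $\G$ is abelian then $Z(\G)=\G\neq\{0\}$, so I may assume $\G$ is non-abelian; then $[\G,\G]$ is a nonzero nilpotent ideal (since $\G$ is solvable), its center $Z([\G,\G])$ is nonzero, and being characteristic in $[\G,\G]$ it is a Lie ideal of $\G$.

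The first step is an Engel-type argument. By Proposition \ref{prbot}, $[\G,\G]^{\perp}$ is abelian and $\ad_u^2=0$ for every $u\in[\G,\G]^{\perp}$, so $\{\ad_u|_{Z([\G,\G])}\}_{u\in[\G,\G]^{\perp}}$ is a commuting family of nilpotent operators on the nonzero $\G$-stable subspace $Z([\G,\G])$. Engel's theorem then produces a nonzero $z\in Z([\G,\G])$ with $[u,z]=0$ for every $u\in[\G,\G]^{\perp}$, and since $z$ already centralizes $[\G,\G]$ it centralizes $[\G,\G]+[\G,\G]^{\perp}$.

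The main obstacle is to upgrade this partial centrality to full centrality: by Proposition \ref{degenerate} the ideal $[\G,\G]$ is degenerate, so $[\G,\G]+[\G,\G]^{\perp}$ is a proper subspace of codimension $k=\dim([\G,\G]\cap[\G,\G]^{\perp})>0$. I would close this gap by iterating: the subspace of elements of $Z([\G,\G])$ annihilated by $\ad([\G,\G]^{\perp})$ is $\ad_{\G}$-stable, and the induced action factors through the abelian quotient $\G/([\G,\G]+[\G,\G]^{\perp})$. Using Proposition \ref{Lemma2} (that $[\G,[\G,\G]^{\perp}]\subset N^{\ell}(\G)$) together with the flat identity \eqref{curvature}, one shows that this residual $\ad$-action is nilpotent, whereupon a second application of Engel's theorem yields a nonzero $z_0\in Z(\G)$, contradicting the assumption. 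Establishing nilpotency of the residual action is the main technical hurdle: it is precisely where the flat symplectic hypothesis (rather than mere solvability) is essential, since for an arbitrary solvable Lie algebra the $\ad$-action on an ideal need not be nilpotent.
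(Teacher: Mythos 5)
Your first step is sound: by Proposition \ref{prbot} the operators $\ad_u$, $u\in[\G,\G]^\perp$, restricted to the nonzero ideal $Z([\G,\G])$ form a commuting family of nilpotent maps (indeed $\ad_u\circ\ad_v=0$), so a nonzero common kernel vector $z$ exists and centralizes $[\G,\G]+[\G,\G]^\perp$. The genuine gap is the second step, which you yourself flag as the main technical hurdle but never carry out: the claim that the induced action of the abelian quotient $\G/([\G,\G]+[\G,\G]^\perp)$ on $W=\{z\in Z([\G,\G])\;:\;[u,z]=0\ \mbox{for all}\ u\in[\G,\G]^\perp\}$ is nilpotent. Pointing to Proposition \ref{Lemma2} and identity \eqref{curvature} does not establish this: for an element $x$ outside $[\G,\G]+[\G,\G]^\perp$ you have no a priori control on $\ad_x^*$ (it is neither symmetric nor skew-symmetric with respect to $\om$), and that is precisely the information needed to extract anything from flatness. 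Without this step no contradiction is reached, so the proof is incomplete at its decisive point; note also that the analogous statement fails for general solvable Lie algebras, so the missing argument is where all the content lies.

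It is instructive to compare with how the paper closes the argument, since it sidesteps the residual-action problem entirely. The key mechanism is a duality rather than an Engel step on a complement: if $u$ centralizes $[\G,\G]$ and satisfies $\ad_u^*=\pm\ad_u$, then $0=\om([u,v],w)=\pm\om(v,[u,w])$ for all $v\in[\G,\G]$, $w\in\G$, so $[u,\G]\subset[\G,\G]^\perp$; if in addition $[u,\G]\subset N^\ell(\G)$ (which Proposition \ref{Lemma2} supplies), then $[u,\G]\subset N^\ell(\G)\cap[\G,\G]^\perp=Z(\G)=\{0\}$, forcing $u\in Z(\G)=\{0\}$. This is applied along the chain $C_0=[\G,\G]\cap[\G,\G]^\perp$, $C_k=[[\G,\G],C_{k-1}]$, whose terms lie in $[\G,\G]^\perp$ (for $k=0$) or in $N^\ell(\G)$ (for $k\geq1$) and hence do satisfy $\ad_u^*=\mp\ad_u$; degeneracy of $[\G,\G]$ gives $C_0\neq\{0\}$, the duality gives $C_k\neq\{0\}$ for all $k$, and nilpotency of $[\G,\G]$ drives some $C_{k_0}$ into $Z([\G,\G])\cap N^\ell(\G)$, which the same duality shows is zero. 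If you want to salvage your outline, you should restrict your Engel argument to elements on which $\ad$ is $\om$-symmetric or $\om$-skew-symmetric, which in effect reproduces the paper's chain.
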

\begin{proof} Let us start by showing the following implication
	\begin{equation}\label{imp} Z(\G)= \{0\}\;\Longrightarrow Z([\G,\G])\cap N^\ell(\G)=\{0\}. \end{equation}
	Indeed, suppose that $Z(\G)= 0$ and let $u\in Z([\G,\G])\cap N^\ell(\G)$. Then for any $w\in\G$ and any $v\in[\G,\G]$, since $\ad_u^*=\ad_u$, we have
	\[ 0=\om([u,v],w)=\om(v,[u,w]) \] and hence $[u,w]\in[\G,\G]^\perp$. But we have seen in Proposition \ref{Lemma2} that $[u,w]\in N^\ell(\G)$ thus $[u,w]\in[\G,\G]^\perp\cap N^\ell(\G)=Z(\G)=\{0\}$ which shows that $u\in Z(\G)$ and hence $u=0$.
	
	With this remark in mind let us prove our result by contradiction. Suppose that $\G$ is solvable and $Z(\G)=\{0\}$. We consider the sequence of vector subspace given by 
	\[ C_0=[\G,\G]\cap[\G,\G]^\perp\esp C_k=[[\G,\G], C_{k-1}]\quad k\geq1. \]
	By virtue of Proposition \ref{degenerate}, $C_0\not=\{0\}$ and, according to Proposition \ref{Lemma2}, $C_k\subset N^\ell(\G)$ for any $k\geq1$. Let us show first that, for any $k\geq1$, if $C_k=\{0\}$ then $C_{k-1}=\{0\}$. Suppose that $C_k=\{0\}$. Then for any $u\in C_{k-1}$, for any $v\in[\G,\G]$ and for any $w\in\G$,
	\[ 0=\om([u,v],w)=\pm\om(v,[u,w]) \]since $\ad_u^*=-\ad_u$ if $k=1$ and $\ad_u^*=\ad_u$ if $k\geq2$. So $[u,w]\in[\G,\G]^\perp\cap N^\ell(\G)=Z(\G)=\{0\}$ and hence $u=0$. This completes the proof of our assertion and since $C_0\not=\{0\}$ then for any $k\geq1$, $C_k\not=\{0\}$. But $[\G,\G]$ is nilpotent and hence there exists $k_0$  such that $C_{k_0}\subset Z([\G,\G])$. We have also $C_k\subset N^\ell(\G)$ which contradicts \eqref{imp} and hence $Z(\G)\not=\{0\}$.
	\end{proof}

\begin{exem}\label{dim2} The non abelian symplectic Lie algebra of dimension 2 is solvable and has a vanishing center so it is not flat.
\end{exem}
\begin{Le}\label{center}
	Let $(\G,\om)$ be a flat symplectic Lie algebra. Then its center is not trivial.
\end{Le}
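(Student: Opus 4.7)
The plan is to argue by contradiction, adapting the proof of Lemma~\ref{solv center} to cover the non-solvable case. Assume $Z(\G)=\{0\}$. Lemma~\ref{solv center} immediately rules out $\G$ solvable, so we may assume $\G$ is not solvable; Proposition~\ref{bot} then forces the radical $R(\G)$ to be degenerate, and Proposition~\ref{degenerate} gives $C_0:=[\G,\G]\cap[\G,\G]^{\perp}\neq\{0\}$. A crucial preliminary observation is that the hypothesis $Z(\G)=\{0\}$ forces the two-sided ideal $N^{\ell}(\G)$ to be abelian: for $u,v\in N^{\ell}(\G)$ the identities $\ad_u=\ad_u^{*}$ and $\ad_v=\ad_v^{*}$ yield $\ad_{[u,v]}^{*}=[\ad_u,\ad_v]^{*}=-\ad_{[u,v]}$, so $[u,v]\in N^{\ell}(\G)\cap[\G,\G]^{\perp}=Z(\G)=\{0\}$ by Proposition~\ref{bot}.

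I would then set up exactly the same chain $C_k:=[[\G,\G],C_{k-1}]$ as in Lemma~\ref{solv center}. Two ingredients of that proof transfer verbatim, as they do not use solvability: (i) each $C_k$ with $k\geq 1$ lies in $N^{\ell}(\G)\cap[\G,\G]$, via Proposition~\ref{Lemma2} together with the two-sided ideal property of $N^{\ell}(\G)$; and (ii) the implication $C_k=\{0\}\Rightarrow C_{k-1}=\{0\}$, coming from implication~\eqref{imp}. The single place where the original proof genuinely used solvability was the nilpotency of $[\G,\G]$, invoked to produce some $k_0$ with $C_{k_0}\subset Z([\G,\G])$. I would replace that ingredient by the abelianness of $N^{\ell}(\G)$ established above: since every $C_k$ ($k\geq 1$) sits in the abelian ideal $N^{\ell}(\G)\cap[\G,\G]$, an Engel-type analysis of the $\ad_{[\G,\G]}$-action on this finite-dimensional abelian ideal, exploiting the self-adjointness $\ad_u^{*}=\ad_u$ valid for $u\in N^{\ell}(\G)$ together with Propositions~\ref{prbot} and~\ref{Lemma2}, should produce some $k_0$ with $C_{k_0}\subset Z([\G,\G])\cap N^{\ell}(\G)$. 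Implication~\eqref{imp} then forces $C_{k_0}=\{0\}$, and iterating~(ii) yields $C_0=\{0\}$, contradicting the first paragraph.

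The main obstacle will be this termination step: carefully replacing the nilpotency of $[\G,\G]$ (the sole part of Lemma~\ref{solv center} that truly needed solvability) by the abelianness of $N^{\ell}(\G)$ extracted from the assumption $Z(\G)=\{0\}$. The delicate work is to show that the adjoint action of $[\G,\G]$ on the abelian ideal $N^{\ell}(\G)\cap[\G,\G]$ must have a non-trivial common annihilator, using only the self-adjointness, ideal, and square-zero structural data given by Propositions~\ref{bot}, \ref{prbot}, and~\ref{Lemma2}; this is where the bulk of the argument lies.
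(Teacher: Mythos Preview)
Your proposal has a genuine gap at exactly the place you yourself flag as ``the main obstacle.'' Everything up to that point is fine: the observation that $Z(\G)=\{0\}$ forces $N^{\ell}(\G)$ to be abelian is correct, the chain $C_k$ is well-defined, the inclusions $C_k\subset N^{\ell}(\G)\cap[\G,\G]$ for $k\geq1$ hold, and the backward implication $C_k=\{0\}\Rightarrow C_{k-1}=\{0\}$ carries over unchanged. But the termination step is not proved, and the tools you cite do not supply it. Propositions~\ref{prbot} and~\ref{Lemma2} give structural information about elements of $[\G,\G]^{\perp}$ and about $N^{\ell}(\G)$ as a two-sided ideal; neither says anything about how the \emph{full} derived algebra $[\G,\G]$ acts on the abelian ideal $A:=N^{\ell}(\G)\cap[\G,\G]$. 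The self-adjointness $\ad_u^{*}=\ad_u$ is only available for $u\in N^{\ell}(\G)$, not for arbitrary $u\in[\G,\G]$, so there is no ``Engel-type'' nilpotency of the $[\G,\G]$-action on $A$ waiting to be extracted. In a hypothetically non-solvable $\G$ the derived algebra would contain a semisimple factor acting completely reducibly on $A$, and then the chain $C_k$ could cycle without ever entering $Z([\G,\G])$. You have relocated the difficulty rather than removed it.

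The paper's argument avoids this issue entirely by taking a different dichotomy: unimodular versus non-unimodular, rather than solvable versus non-solvable. If $\G$ is unimodular then Proposition~\ref{unimodular} gives solvability and Lemma~\ref{solv center} applies. If $\G$ is non-unimodular the paper shows directly that the vector $H$ (defined by $\omega(H,u)=\tr(\ad_u)$) lies in $Z(\G)$. The mechanism is a trace computation: for $u\in N^{\ell}(\G)^{\perp}$ one builds the sequence $x_0=u$, $x_{n+1}=\Ll_{x_n}u$, proves inductively via \eqref{flatbis} that $\Rr_{x_n}=\Rr_u^{n+1}+(\text{commutator terms})$, and uses $\omega(H,x_{n+1})=0$ (which follows from $[\G,[\G,\G]^{\perp}]\subset N^{\ell}(\G)$) to conclude $\tr(\Rr_u^{\,n+2})=0$ for all $n$, hence $\Rr_u$ is nilpotent and $\tr(\ad_u)=0$. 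This places $H\in N^{\ell}(\G)\cap[\G,\G]^{\perp}=Z(\G)$. The key idea you are missing is this direct control of $H$ through traces of powers of right multiplications; nothing in your outline produces a specific nonzero central element.
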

\begin{proof}
	Let $(\G,\om)$ be a flat symplectic Lie algebra. If $\G$ is unimodular, then it is solvable, and the result follows from  lemma \ref{solv center}. Suppose $\G$ is non-unimodular which implies that $H\neq0$. We will show that $H\in Z(\G)$. Let $u\in \mathfrak{g}$, we put  $x_{0}=u$ and $x_{n+1}=L_{x_{n}}(u)$   for any $n\geq 0$. Let us show by induction that, for any $n\geq 1$ \begin{equation}
	R_{x_{n}}=R_{u}^{n+1}+R_{u}^{n-2}\circ [L_{u},R_{u}]+R_{u}^{n-3}\circ [L_{x_{1}},R_{u}]+...+[L_{x_{n-1}},R_{u}].\label{RR}\end{equation}
	From $(\ref{flatbis})$, the relation is true for $n=1$. We assume the property established at the rank $n\geq 1$. According to the relation $(\ref{flatbis})$ we have
	\begin{equation*}
	\begin{aligned}
	R_{x_{n+1}}=R_{x_{n}\cdot u}=&   R_{u}\circ R_{x_{n}}+[L_{x_{n}}, R_{u}]\\=& R_{u}\circ( R_{u}^{n+1}+R_{u}^{n-2}\circ [L_{u},R_{u}]+R_{u}^{n-3}\circ [L_{x_{1}},R_{u}]+...+[L_{x_{n-1}},R_{u}])+ [L_{x_{n}}, R_{u}] \\=& R_{u}^{n+2}+R_{u}^{n-1}\circ [L_{u},R_{u}]+R_{u}^{n-2}\circ [L_{x_{1}},R_{u}]+...+[L_{x_{n}},R_{u}]
	\end{aligned}
	\end{equation*}
	Now, according to the Proposition $\eqref{Lemma2}$, we have $[H,x]\in N_{\ell}(\mathfrak{g})$, for any $x\in\mathfrak{g}$. Moreover, let $u\in N_{\ell}^{\perp}(\mathfrak{g})$ we put, for any $n\geq 0$ $x_{n+1}=L_{x_{n}}(u)$ where $x_{0}=u$, according to the relation $(\ref{products})$, we have $$\om(H,x_{n+1})=\om(H,L_{x_{n}}(u))=\frac{1}{3}\om([H,x_{n}],u)=0.$$
	On the other hand, we have 
	$$\om(H,x_{n+1})=tr(ad_{x_{n+1}})=-tr(R_{x_{n+1}})$$ and according to the relation $\eqref{RR}$ and to the fact that $\tr\left(R_{u}^{p}\circ [L_{u},R_{u}]\right)=0$, we have, for any $n\geq 0$ $$tr(R_{x_{n+1}})=tr(R_{u}^{n+2})=0$$
	which implies that, for all $u\in N_{\ell}^{\perp}(\mathfrak{g})$, $R_{u}$  is nilpotent. It follows that $H\in N_{\ell}(\mathfrak{g}) $ and since $H\in [\G,\G]^\perp$, one has $H\in Z(\G)$ as desired.
	
\end{proof}
\begin{Le}\label{nontrivial}
A flat symplectic Lie algebra $(\G,\omega)$ is with non-trivial center if and only if 	$(\G,\omega)$ is a double extension of a flat symplectic Lie algebra $(B,\omega_B)$ by means of $(\xi,b_0)$.
\end{Le}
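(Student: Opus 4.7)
The plan is to derive the lemma immediately from Proposition \ref{inversede}, the key input being the equality $Z(\G)=N^\ell(\G)\cap N^r(\G)$ proved in Proposition \ref{bot}.

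For the sufficiency, I would assume $(\G,\omega)$ is the double extension of some $(B,\omega_B)$ by $(\xi,b_0)$ and simply read off from the formulas for $\bullet$ displayed in Section \ref{section3} that $\Ll_e=\Rr_e=0$. Hence $e\in N^\ell(\G)\cap N^r(\G)=Z(\G)$, and in particular $Z(\G)\neq\{0\}$. This is already recorded in the first bullet of the remark following Proposition \ref{inversede}.

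For the necessity, I would pick any nonzero $e\in Z(\G)$ and aim to verify the hypothesis of Proposition \ref{inversede}, namely that $I:=\mathbb{R}e$ is a one-dimensional two-sided ideal whose $\omega$-orthogonal $I^\perp$ is also two-sided. That $\mathbb{R}e$ is two-sided is immediate, because by Proposition \ref{bot} we have $e\bullet x=x\bullet e=0$ for every $x\in\G$. For the orthogonal, I would use the skew-symmetry \eqref{dom} of $\Ll_u$: for any $x\in\G$ and $y\in I^\perp$,
\[
\omega(x\bullet y,e)=-\omega(y,x\bullet e)=0 \esp \omega(y\bullet x,e)=-\omega(x,y\bullet e)=0,
\]
since $x\bullet e=y\bullet e=0$. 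Thus $I^\perp$ is stable under $\bullet$ on both sides, and Proposition \ref{inversede} then produces the desired double extension structure, identifying $(B,\omega_B)$ with the reduction $(I^\perp/I,\bar\omega)$.

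There is no genuine obstacle in this argument: the entire proof is a short unwinding of Proposition \ref{inversede}, and the decisive observation is that a single nonzero central element $e$ simultaneously yields the trivial two-sided ideal $\mathbb{R}e$ and, by skew-symmetry of $\Ll_u$, forces $(\mathbb{R}e)^\perp$ to be two-sided as well.
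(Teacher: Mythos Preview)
Your proof is correct and follows essentially the same route as the paper: both directions reduce to Proposition~\ref{inversede} by observing that a nonzero central $e$ gives $\Ll_e=\Rr_e=0$, so $\mathbb{R}e$ is a two-sided ideal, and then the skew-symmetry \eqref{dom} together with $\Rr_e=0$ forces $(\mathbb{R}e)^\perp$ to be two-sided as well. The only cosmetic difference is that for the converse the paper reads $e\in Z(\G)$ directly off the Lie brackets \eqref{Liebrackets}, whereas you pass through $N^\ell(\G)\cap N^r(\G)=Z(\G)$.
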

\begin{proof}
Let $(\G,\omega)$ be a flat symplectic Lie algebra. Assume that $Z(\G)\neq\{0\}$ and let $e$ be a non-nul vector in $Z(\G)$. Thus $\Ll_e=\Rr_e=0$ and hence $\mathrm{I}=\mathbb{R}e$ is a two-sided ideal. Furthermore, for any $a\in\mathrm{I}^\bot$ and $x\in\G$,
\[
\omega(ax,e)=-\omega(x,ae)=0\mbox{ and }\omega(xa,e)=-\omega(a,xe)=0.
\]	
Then $\mathrm{I}^\bot$ is also a two-sided ideal. Thus, according to Proposition \ref{inversede}, $(\G,\omega)$ is a double extension of a flat symplectic Lie algebra $(B,\omega_B)$ by means of $(\xi,b_0)$. Conversely, if $(\G,\omega)$ is obtained by the double extension process then according to the Lie brackets given in \eqref{Liebrackets}, one has $e\in Z(\G)$ and $\G$ has a non-trivial center as desired.
\end{proof}
\begin{theo}\label{main}
A flat symplectic Lie algebra must be nilpotent with degenerate center.
\end{theo}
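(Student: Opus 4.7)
The plan is to prove the theorem by induction on the dimension of $\G$, leveraging the double extension machinery built in Section \ref{section3} together with Lemmas \ref{center} and \ref{nontrivial}.

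\textbf{Nilpotence.} I would induct on $\dim\G$. The base cases are trivial: dimension $0$ is vacuous, and in dimension $2$ the only flat symplectic Lie algebra is the abelian one, since the unique non-abelian symplectic Lie algebra of dimension $2$ has trivial center (Example \ref{dim2}) and therefore cannot be flat by Lemma \ref{center}. For the inductive step, assume every flat symplectic Lie algebra of dimension strictly less than $\dim\G$ is nilpotent. By Lemma \ref{center}, $Z(\G) \neq \{0\}$. Invoking Lemma \ref{nontrivial}, $(\G,\om)$ is a double extension of some flat symplectic Lie algebra $(B,\om_B)$ with $\dim B = \dim\G - 2$ by means of an admissible pair $(\xi,b_0)$. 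The induction hypothesis applies to $B$, so $B$ is nilpotent, and Proposition \ref{nipotentinverse} then yields that $\G$ itself is nilpotent.

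\textbf{Degeneracy of the center.} From Proposition \ref{bot} we have $Z(\G) = (\G\G)^{\perp}$, so taking orthogonals gives $Z(\G)^{\perp} = \G\G$. Hence the center is degenerate precisely when $Z(\G)\cap \G\G \neq \{0\}$, and since $[\G,\G] \subset \G\G$ (because $[u,v] = u\bullet v - v\bullet u$), it is enough to exhibit a non-zero element in $Z(\G)\cap [\G,\G]$. If $\G$ is abelian the statement is vacuous (there is nothing to verify); otherwise, having just proven that $\G$ is nilpotent, the lower central series $\G \supset [\G,\G] \supset [\G,[\G,\G]] \supset \cdots$ eventually terminates at $\{0\}$, and its last non-zero term is a non-trivial ideal sitting inside both $Z(\G)$ and $[\G,\G]$. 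This produces the required non-zero vector in $Z(\G)\cap Z(\G)^{\perp}$.

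\textbf{Main obstacle.} The work is almost entirely one of assembly: Lemmas \ref{solv center}, \ref{center}, \ref{nontrivial} and Proposition \ref{nipotentinverse} have been precisely designed to feed into this induction, so the only delicate point is being careful about the base case and the trivial abelian situation. In particular one must verify that in dimension $2$ the non-abelian option is really eliminated (this is exactly what Example \ref{dim2} and Lemma \ref{center} together rule out), and one should note explicitly that the ``degenerate center'' assertion is meaningful only when $\G\G \neq \{0\}$, i.e.\ outside the (trivial) abelian case where $Z(\G) = \G$ is automatically non-degenerate.
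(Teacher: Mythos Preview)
Your proof is correct and follows essentially the same route as the paper: the nilpotence is obtained by the identical induction via Lemma \ref{center}, Lemma \ref{nontrivial}, and Proposition \ref{nipotentinverse}, and the degeneracy of the center is deduced from $[\G,\G]\subset Z(\G)^\perp$ together with the fact that a non-abelian nilpotent Lie algebra satisfies $Z(\G)\cap[\G,\G]\neq\{0\}$. Your write-up is in fact a bit more explicit than the paper's on the degeneracy step (you spell out the lower-central-series argument and flag the abelian edge case), but there is no substantive difference in strategy.
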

\begin{proof}
By induction on $\dim \G$. If $\dim\G=2$ then according to example \ref{dim2}, $\G$ is nilpotent. Suppose the property is correct for $\dim\G=2n$, and let us show that it is also correct for $\dim\G=2n+2$. Let $(\G,\omega)$ be a flat symplectic Lie algebra of dimension $2n+2$. According to lemma \ref{center}, the center of $\G$ is not-trivial, and hence from lemma \ref{nontrivial}, $(\G,\omega)$ is a double extension of a flat symplectic Lie algebra $(B,\omega_B)$ of dimension $2n$. By the induction hypothesis, we deduce that $B$ is nilpotent and according to Proposition \ref{nipotentinverse}, $\G$ must be nilpotent. Since $Z(\G)\subset [\G,\G]^\bot$, then $Z(\G)\cap[\G,\G]\subset Z(\G)\cap Z(\G)^\bot$ and hence $Z(\G)$ is degenerate.
\end{proof}
\begin{theo}\label{th double}
	$(\G,\omega)$ is a flat symplectic Lie algebra if and only if $(\G,\omega)$ is obtained by a sequence of double extension starting from $\{0\}$. 	
\end{theo}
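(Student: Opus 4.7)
The plan is to proceed by induction on $\dim\G$, noting that almost all the technical work has already been carried out in Section 3 and in Lemmas \ref{solv center}, \ref{center} and \ref{nontrivial}. The statement is an equivalence, so I would treat the two directions separately, with the reverse implication being the essentially straightforward one and the forward implication being the one that requires invoking the full inductive machinery.

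For the ``if'' direction, I would argue by induction on the length of the sequence of double extensions. The base case is $\{0\}$, which is trivially a flat symplectic Lie algebra. For the inductive step, suppose $(B,\omega_B)$ has been obtained as a sequence of double extensions starting from $\{0\}$, and that $(\G,\omega)$ is a double extension of $(B,\omega_B)$ by means of an admissible couple $(\xi,b_0)\in\End(B)\times B$. The detailed verification carried out in Section \ref{section3} (checking that the conditions \eqref{eq1}--\eqref{eq5} imply left-symmetry of the product $\bullet$ on $\G=\mathbb{R}e\oplus B\oplus\mathbb{R}\bar e$ together with $\na\Om=0$) shows precisely that $(\G,\omega)$ is again a flat symplectic Lie algebra, which closes the induction.

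For the ``only if'' direction, I would run an induction on $\dim\G$, which is necessarily even since $\omega$ is symplectic. The base case $\dim\G=0$ is immediate. Assume the conclusion for all flat symplectic Lie algebras of dimension less than $\dim\G$, and let $(\G,\omega)$ be a flat symplectic Lie algebra. By Lemma \ref{center} we have $Z(\G)\neq\{0\}$, so by Lemma \ref{nontrivial} the algebra $(\G,\omega)$ is itself a double extension of some flat symplectic Lie algebra $(B,\omega_B)$ with $\dim B=\dim\G-2$ by means of an admissible couple $(\xi,b_0)$. The induction hypothesis then produces a sequence of double extensions expressing $(B,\omega_B)$ starting from $\{0\}$, and appending the final double extension by $(\xi,b_0)$ yields the desired sequence for $(\G,\omega)$.

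There is no genuine obstacle here: the real content sits in Lemmas \ref{center} and \ref{nontrivial}, which guarantee respectively the existence of a nontrivial central element and the equivalence between having a nontrivial center and being a double extension. What one might want to double-check is that the induction is truly well-founded, i.e., that at each stage the smaller algebra $(B,\omega_B)$ identified in Lemma \ref{nontrivial} really is flat symplectic of dimension $\dim\G-2$, but this is exactly the content of Proposition \ref{inversede}. The whole argument is therefore just an assembly of previously established facts into an induction on dimension.
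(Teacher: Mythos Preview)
Your proposal is correct and follows essentially the same approach as the paper: induction on dimension, using the nontriviality of the center together with Lemma~\ref{nontrivial} to peel off one double extension at a time. The only cosmetic difference is that the paper invokes Theorem~\ref{main} (nilpotency) to guarantee $Z(\G)\neq\{0\}$ at each step, whereas you appeal directly to Lemma~\ref{center}; since the proof of Theorem~\ref{main} itself rests on Lemma~\ref{center}, your route is slightly more direct but not genuinely different.
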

\begin{proof}
	Let $(\G,\omega)$ be a flat symplectic Lie algebra. Since $\G$ is nilpotent, then $Z(\G)\neq\{0\}$. Thus, according to lemma \ref{nontrivial}, $(\G,\omega)$ is a double extension of a flat symplectic Lie algebra $(B_1,\omega_{B_1})$. Since $B_1$ is nilpotent then $(B_1,\omega_{B_1})$ is also a double extension of a flat symplectic nilpotent Lie algebra $(B_2,\omega_{B_2})$. Thus, $(\G,\omega)$ is obtained by a sequence of double extension starting from $\{0\}$. 
\end{proof}
\section{Examples of flat symplectic Lie algebras of low dimensions}\label{section5}
In this section, we give all flat symplectic Lie algebras of dimension $\leq6$.
\subsection{Four dimensional flat symplectic Lie algebras}
Four dimensional flat symplectic Lie algebras are double extension of the 2-dimensional symplectic abelian Lie algebra by means of an addmissible couple $(\xi,b_0)$. Note that if $B$ is abelian then $(\xi,b_0)\in End(B)\times B$ is admissible if and only if $\xi\xi^*=\xi^2,\ \xi^*\xi=0\mbox{ and }b_0\in\ker(\xi^*-\xi)$.
\begin{pr}\label{solution admissible dim2}
	Let $(B,\om_B)$ be the symplectic  abelian Lie algebra of dimension $2$, then $(\xi, b_0)$ is admissible if and only if ($\xi=0$ and $b_0\in B$) or there exists a symplectic basis $\{e_1,e_2\}$ of $(B,\om_B)$ such that 
	\[\xi=\left(\begin{array}{cccc}0 & a  \\ 0 & 0 \end{array}\right) \text{ and }b_0=\alpha e_1,
	\]
	where $\alpha \in \mathbb{R}$ and $a\neq 0.$
\end{pr}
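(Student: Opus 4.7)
The plan is to translate the five admissibility equations into the abelian setting, where they collapse dramatically, and then diagonalize the situation via a careful choice of symplectic basis.

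Since $B$ is abelian of dimension $2$, the product $\bullet$ on $B$ vanishes identically, so $\Rr_a = \Ll_a = 0$ and $[a,b]_B = 0$ for every $a,b \in B$. Consequently, equations \eqref{eq4} and \eqref{eq5} are automatic, while \eqref{eq1}, \eqref{eq2}, \eqref{eq3} reduce respectively to $\xi\xi^* - \xi^*\xi = \xi^2$, $b_0\in\ker(\xi^*-\xi)$, and $\xi^*\xi = 0$. The first step is therefore to show that $\xi$ is necessarily nilpotent with $\xi^2=0$. The key observation is the $2$-dimensional identity $\xi^*\xi = (\det\xi)\,\mathrm{Id}_B = \xi\xi^*$, which follows from a direct computation of $M^* = -JM^TJ$ for $M\in M_2(\mathbb{R})$ with $J$ the matrix of $\om_B$ in a symplectic basis. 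From $\xi^*\xi = 0$ we deduce $\det\xi = 0$, hence $\xi\xi^* = 0$, and plugging into \eqref{eq1} yields $\xi^2 = 0$.

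Next, I split into two cases. If $\xi = 0$, every condition becomes trivial and $b_0\in B$ is arbitrary; this is the first alternative of the statement. If $\xi\neq 0$, then from $\xi^2=0$ one has $\mathrm{Im}(\xi) = \ker(\xi)$, both one-dimensional. Pick $e_1$ spanning $\mathrm{Im}(\xi)$ and choose $e_2$ with $\om_B(e_1,e_2) = 1$; this is automatically a symplectic basis of $(B,\om_B)$ since $\om_B(e_1,e_1)=\om_B(e_2,e_2)=0$. By construction $\xi(e_1)=0$ and $\xi(e_2) = a\,e_1$ for a unique $a\in\mathbb{R}\setminus\{0\}$, giving exactly the desired matrix form of $\xi$.

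Finally, using the explicit formula for the symplectic adjoint in a symplectic basis, one computes
\[
\xi^* = \begin{pmatrix} 0 & -a \\ 0 & 0 \end{pmatrix}, \qquad \xi^* - \xi = \begin{pmatrix} 0 & -2a \\ 0 & 0 \end{pmatrix},
\]
so $\ker(\xi^*-\xi) = \mathbb{R}e_1$ (because $a\neq 0$), and \eqref{eq2} forces $b_0 = \alpha e_1$ with $\alpha\in\mathbb{R}$. Conversely, any couple $(\xi,b_0)$ of either type above trivially satisfies the reduced conditions $\xi\xi^*=\xi^2$, $\xi^*\xi=0$, $b_0\in\ker(\xi^*-\xi)$, proving admissibility. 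The only mildly delicate point is the observation $\xi^*\xi=(\det\xi)\mathrm{Id}$ in dimension two, which is what lets the problem reduce to elementary linear algebra; once that is in hand, the rest is a normal-form argument.
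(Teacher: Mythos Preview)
Your proof is correct and follows essentially the same route as the paper: reduce the admissibility conditions to $\xi\xi^*=\xi^2$, $\xi^*\xi=0$, $b_0\in\ker(\xi^*-\xi)$ in the abelian case, deduce $\xi^2=0$, and then normalize via a symplectic basis. The only difference is that you justify the nilpotency step explicitly through the two-dimensional identity $\xi^*\xi=\xi\xi^*=(\det\xi)\mathrm{Id}_B$ (i.e.\ the symplectic adjoint coincides with the adjugate), whereas the paper simply asserts that $\xi$ is nilpotent; your argument is therefore a slightly more detailed version of the same proof.
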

\begin{proof}
	Since $\xi\xi^*=\xi^2$ and $\xi^*\xi=0$ we deduce that  $\xi$ is nilpotent, then $\xi=0$ and $b_0\in B$, or $\xi^{2}=0$ and $\xi\neq0$. In the last case, there exists a symplectic basis $\{e_1,e_2\}$ of $B$ such that  
	\[
	 \xi=\left(\begin{array}{cccc}0 & a  \\ 0 & 0 \end{array}\right) \text{ and} \quad b_0=\alpha e_{1}+\beta e_2
	 \]
	where $\alpha,\beta\in \mathbb{R}$ and $a\neq0.$\\
	Form $\xi(b_{0})=\xi^*(b_{0})$
	we deduce that $\beta=0$.
\end{proof}
\begin{pr}\label{dim4}
	The only non-abelian flat symplectic Lie algebra of dimension 4 is $(\mathbb{R}\times\mathfrak{h}_3,\omega)$ where
	\[
	\mathbb{R}\times\mathfrak{h}_3:\ [x_1,x_2]=x_3\mbox{ and }\omega_0=x_1^*\wedge x_4^*+x_2^*\wedge x_3^*.
	\]	
\end{pr}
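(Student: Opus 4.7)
The strategy is to combine Theorem~\ref{th double} with the classification of admissible couples in Proposition~\ref{solution admissible dim2}. By Theorem~\ref{th double} every $4$-dimensional flat symplectic Lie algebra is a double extension of some $2$-dimensional flat symplectic Lie algebra; by Example~\ref{dim2} the only such base is the abelian $(B,\om_B)$, so Proposition~\ref{solution admissible dim2} already enumerates all possible admissible couples $(\xi,b_0)$. The branch $\xi=0$, $b_0=0$ produces the abelian $4$-dimensional Lie algebra and is discarded.

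I would then compute the brackets in the remaining branches using~\eqref{Liebrackets}. In the branch $\xi=0$, $b_0=\beta e_1\neq 0$ the only non-trivial bracket is $[\bar e,e_2]=\beta e$. In the branch $\xi=\bigl(\begin{smallmatrix}0 & a\\ 0 & 0\end{smallmatrix}\bigr)$ with $a\neq 0$ and $b_0=\alpha e_1$, a direct computation in the chosen symplectic basis of $B$ gives $\xi^*=\bigl(\begin{smallmatrix}0 & -a\\ 0 & 0\end{smallmatrix}\bigr)$, so $\xi^*-2\xi=\bigl(\begin{smallmatrix}0 & -3a\\ 0 & 0\end{smallmatrix}\bigr)$ and $\xi+\xi^*=0$; hence the unique non-trivial bracket is $[\bar e,e_2]=\alpha e-3ae_1$. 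In every non-abelian case the derived ideal is therefore one-dimensional and contained in the centre $\R e\oplus\R e_1$, so $\G$ is two-step nilpotent with $\dim[\G,\G]=1$, which forces $\G\simeq\R\times\fr{h}_3$ as Lie algebras.

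It remains to put the symplectic form into the stated normal form $\om_0=x_1^*\we x_4^*+x_2^*\we x_3^*$. In the sub-case $\xi=0$, $b_0=e_1$ (after rescaling $\beta=1$), the choice $x_1=e_2$, $x_2=\bar e$, $x_3=-e$, $x_4=-e_1$ satisfies $[x_1,x_2]=x_3$ and the pairings $\om(x_i,x_j)$ check out to give exactly $\om_0$. For the generic sub-case $[\bar e,e_2]=\alpha e-3ae_1$, I would set $x_1=\lambda e_2+\sigma\bar e$ and $x_2=\bar e+\mu e_2$, then solve for $\lambda,\mu,\sigma\in\R$ so that $[x_1,x_2]$ is a prescribed non-zero central vector $x_3$ with $\om(x_2,x_3)=1$, and complete with $x_4$ in the two-dimensional centre satisfying $\om(x_1,x_4)=1$ and $\om(x_2,x_4)=\om(x_3,x_4)=0$.

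The main obstacle is this last normalisation in the degenerate sub-case $\alpha=0$, where $[\bar e,e_2]\in\R e_1$: the naive choice $x_2=\bar e$ yields $\om(x_2,[x_1,x_2])=0$, so $\bar e$ must be mixed with $e_2$ to produce a non-zero pairing, which is exactly why the parameter $\mu$ is needed. Once this bookkeeping is carried out in each sub-case, the resulting symplectic Lie algebra is isomorphic to $(\R\times\fr{h}_3,\om_0)$, proving the proposition.
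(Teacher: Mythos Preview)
Your approach is essentially the same as the paper's: reduce via Theorem~\ref{th double} and Proposition~\ref{solution admissible dim2} to the two admissible branches, compute the brackets from~\eqref{Liebrackets}, and then normalise the basis. Two small points: in the branch $\xi=0$ the proposition only gives $b_0\in B$, so you should say why you may take $b_0=\beta e_1$ (a symplectic automorphism of the abelian $B$ suffices); and where you only outline the basis change in the $\xi\neq0$ branch, the paper writes down an explicit symplectic basis $(x_1,x_2,x_3,x_4)$ in terms of $(e,\bar e,e_1,e_2,a,\alpha)$ that works uniformly, including at $\alpha=0$.
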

\begin{proof}
	Let $(\mathfrak{g},\om)$ be  a flat  symplectic non-abelian Lie algebra of dimension $4$. According to Theorem \ref{th double}, $\mathfrak{g}$ is a double extension of the  symplectic abelian Lie algebra of dimension $2$. According to Proposition \ref{solution admissible dim2} we have two cases:
	\begin{enumerate}
		\item[$\bullet$] If $\xi=0$ and $b_0=\alpha e_{1}+\beta e_2$ where $\alpha,\beta\in \mathbb{R}$,  then according to \eqref{Liebrackets}, there exists a symplectic basis $\{e,\bar{e},e_{1},e_2\}$ of $\mathfrak{g}$ such that the Lie brackets are given by 
		\begin{equation*}
		\left[\bar{e},e_1\right]=-\beta e,\;\left[\bar{e},e_2\right]=\alpha e.
		\end{equation*} 
		Since $\G$ is non-abelian, then $(\alpha,\beta)\neq(0,0)$. If for example $\beta\neq0$, then we put  $$(x_1,x_2,x_3,x_4)=(-\frac{1}{\beta}e_1, \bar{e}, -e, -\beta e_2-\alpha e_1).$$ 
			Thus $(\mathfrak{g},\om)$ is symplecto-isomorphic to $(\mathbb{R}\times\mathfrak{h}_3,\om_0)$.
		\item[$\bullet$] If $\xi\neq0$ and $b_0=\alpha e_{1}$ where $\alpha\in \mathbb{R}$,  then there exists a symplectic basis $\{e,\bar{e},e_{1},e_2\}$ of $\mathfrak{g}$ such that the Lie bracket is given by 
		\begin{equation*}
		\left[\bar{e},e_2\right]=\alpha e-3ae_1, \,\, a\neq 0.\end{equation*} 
		Put  $$(x_1,x_2,x_3,x_4)=\left(3a\bar{e}+\alpha e_2,\bar{e}+\frac{\alpha+1}{3a}e_2, \alpha e-3ae_1,-\frac{\alpha+1}{3a}e+e_1\right).$$ 
		Thus, in this new basis, the Lie brackets and the flat symplectic form are reduced to
		\[
		[x_1,x_2]=x_3\mbox{ and }\om=x_1^*\wedge x_4^*+x_2^*\wedge x_3^*,
		\]
		which implies that $(\mathfrak{g},\om)$ is symplecto-isomorphic to $(\mathbb{R}\times\mathfrak{h}_3,\om_0)$ as desired. 	
	\end{enumerate}
\end{proof}
\subsection{Flat symplectic Lie algebras of dimension 6}
According to Theorem \ref{th double}, flat symplectic Lie algebras of dimension 6 are obtained from 4-dimensional flat symplectic Lie algebras by using the double extension process. From Proposition \ref{dim4}, any flat symplectic Lie algebra of dimension 4 is abelian or isomorphic to $\mathbb{R}\times\mathfrak{h}_3$. Thus, we must determine all admissible couples $(\xi,b_0)$ in a flat symplectic Lie algebra $(B,\omega_B)$ where $B$ is abelian or $B$ is isomorphic to $\mathbb{R}\times\mathfrak{h}_3$.
 
{\bf First case: $B$ is abelian.\\}
In the abelian case, $(\xi,b_0)$ is admissible if and only if  
\[
\xi\xi^*=\xi^2,\ \xi^*\xi=0\mbox{ and }b_0\in\ker(\xi^*-\xi).
\]
\begin{Le}\label{Lemma1}
Let $(B,\omega_B)$ be the abelian symplectic Lie algebra of dimension 4. If $(\xi,b_0)$ is admissible then $Im(\xi)$ is totally isotropic and $\xi^2=0$. 	
\end{Le}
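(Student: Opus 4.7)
The admissibility conditions reduce (when $B$ is abelian) to $\xi^*\xi=0$, $\xi\xi^*=\xi^2$, and $b_0\in\ker(\xi^*-\xi)$; the constraint on $b_0$ plays no role here. The totally isotropic conclusion should fall out immediately from $\xi^*\xi=0$: for any $u,v\in B$,
$$\omega_B(\xi u,\xi v)=\omega_B(u,\xi^*\xi v)=0,$$
so $\mathrm{Im}(\xi)\subseteq\mathrm{Im}(\xi)^\perp$, and in particular $\dim\mathrm{Im}(\xi)\leq 2$.

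For $\xi^2=0$, the plan is a direct matrix computation. First I would extend $\mathrm{Im}(\xi)$ to a lagrangian $L\subset B$ and pick a complementary lagrangian $L'$ with a dual basis, so that the symplectic form is $\Omega=\bigl(\begin{smallmatrix} 0 & I_2 \\ -I_2 & 0 \end{smallmatrix}\bigr)$ relative to the decomposition $B=L\oplus L'$. Since $\mathrm{Im}(\xi)\subset L$, we have $\xi=\bigl(\begin{smallmatrix} A & C \\ 0 & 0 \end{smallmatrix}\bigr)$, and a short calculation then gives $\xi^*=\bigl(\begin{smallmatrix} 0 & -C^T \\ 0 & A^T \end{smallmatrix}\bigr)$. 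The relation $\xi\xi^*=\xi^2$ becomes the pair of block identities
$$A^2=0 \qquad\text{and}\qquad AC+AC^T-CA^T=0.$$

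To finish I need $AC=0$. Transposing the second identity and adding it to itself yields $AC+(AC)^T=0$, so $AC$ is a $2\times 2$ skew-symmetric matrix. Since $A^2=0$, either $A=0$ (and then $AC=0$ trivially) or $\mathrm{rank}(A)=1$, which forces $\mathrm{rank}(AC)\leq 1$. But every nonzero $2\times 2$ skew-symmetric matrix has the form $\bigl(\begin{smallmatrix} 0 & x \\ -x & 0 \end{smallmatrix}\bigr)$ with $x\neq 0$, hence determinant $x^2\neq 0$ and rank $2$; so $AC=0$ in either case, and therefore $\xi^2=0$.

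The main obstacle I expect is this final step. Merely combining $\xi^*\xi=0$ and $\xi\xi^*=\xi^2$ gives $\xi^3=0$ (via $\xi^2\xi=\xi\xi^*\xi=0$), but $\xi^3=0$ in dimension four is not enough on its own to force $\xi^2=0$; one really has to exploit the off-diagonal identity $AC+AC^T-CA^T=0$ together with the low-dimensional miracle that a $2\times 2$ skew-symmetric matrix has determinant a perfect square, which is what makes the rank argument close.
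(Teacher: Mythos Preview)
Your argument is correct. The first paragraph is identical to the paper's; the second is a genuinely different route to $\xi^2=0$. The paper argues by contradiction: assuming $\xi^2\neq 0$ it uses $\xi^3=0$ and the rank bound to force $\dim\ker\xi=2$, $\dim\ker\xi^2=3$, then chooses a basis adapted to the flag $\ker\xi\subset\ker\xi^2$ and splits into the two cases ``$\ker\xi$ symplectic'' versus ``$\ker\xi$ totally isotropic'', computing $\xi^*$ and checking $\xi\xi^*=\xi^2$ explicitly in each to reach a contradiction. Your approach is more uniform: by extending $\mathrm{Im}(\xi)$ to a Lagrangian you get the block form $\xi=\bigl(\begin{smallmatrix}A&C\\0&0\end{smallmatrix}\bigr)$ in one stroke, and the identity $\xi\xi^*=\xi^2$ unpacks to $A^2=0$ together with $AC+AC^T=CA^T$; symmetrising the latter gives $AC$ skew-symmetric, and the rank-versus-determinant observation for $2\times 2$ skew matrices finishes it. Your method avoids the case split and the contradiction, at the cost of the small trick of adding the identity to its transpose; the paper's method is more hands-on but makes the structure of $\xi$ in each geometric case visible, which is in any event what the next proposition in the paper needs.
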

\begin{proof}
From $\omega\left(\xi(u),\xi(v)\right)=\omega\left(u,\xi^*\xi(v)\right)=0$, for any $u,v\in B$, we deduce that $Im(\xi)$ is totally isotropic. Let us show that $\xi^2=0$. First, we have $\xi^3=\xi\xi^*\xi=0$. Assume that $\xi^2\neq0$. Then $\ker\xi\subsetneq\ker\xi^2\subsetneq B$. Since $Im(\xi)$ is totally isotropic then $\dim Im(\xi)\leq2$. If $\dim Im(\xi)=1$ then $\dim\ker\xi=3$ and hence $\xi^2=0$, contradiction. Thus, $\dim Im(\xi)=\dim\ker\xi=2$ and $\dim\ker\xi^2=3$. We can choose a basis $\{e_1,e_2,e_3,e_4\}$ of $(B,\omega_B)$ such that $\{e_1,e_2\}$ is a basis of $\ker\xi$, $\{e_1,e_2,e_3\}$ is a basis of $\ker\xi^2$. In this basis, $\xi$ has the form
\[
\xi=\left(\begin{array}{cccc}0&0&a&c\\0&0&b&d\\0&0&0&e\\0&0&0&0\end{array}\right)\mbox{ and }\xi^2=\left(\begin{array}{cccc}0&0&0&ae\\0&0&0&be\\0&0&0&0\\0&0&0&0\end{array}\right)
\]
If $\ker\xi$ is symplectic, then we can choose the basis $\{e_1,e_2,e_3,e_4\}$ to be a symplectic basis. Thus 
\[
\xi^*=\left(\begin{array}{cccc}0&0&0&0\\0&0&0&0\\d&-c&0&-e\\-b&a&0&0\end{array}\right).
\]
From $\xi\xi^*=\xi^2$, we deduce that $ae=be=0$ which contradicts the fact that $\xi^2\neq0$. If $\ker\xi$ is totally isotropic then we can choose the basis such that $\{e_1,e_4,e_2,e_3\}$ is symplectic. In this case $\xi^*$ has the form 
\[
\xi^*=\left(\begin{array}{cccc}0&e&-d&-c\\0&0&-b&-a\\0&0&0&0\\0&0&0&0\end{array}\right).
\]
One can check in this case that $\xi\xi^*=0$, but this contradicts the fact that $\xi\xi^*=\xi^2$ and $\xi^2\neq0$. This completes the proof of the lemma.
\end{proof}
\begin{pr}\label{abelian}
Let $(B,\omega_B)$ be the abelian symplectic Lie algebra of dimension 4. If $(\xi,b_0)$ is admissible then there exists a basis $\mathbb{B}=\{e_1,e_2,e_3,e_4\}$ of $B$ such that $\mathbb{B}$, $\xi$ and $b_0$ have one of these forms:
\begin{enumerate}
	\item $\xi=0$, $b_0\in B$ and $\{e_1,e_2,e_3,e_4\}$ is symplectic.
	\item $\{e_1,e_4,e_2,e_3\}$ is symplectic, 
\[
\xi=\left(\begin{array}{cccc}0&0&a&b\\0&0&c&d\\0&0&0&0\\0&0&0&0\end{array}\right) \mbox{ and }b_0=\alpha e_1+\beta e_2,\mbox{ where }a,b,c,d,\alpha,\beta\in\mathbb{R}\mbox{ with }ad-bc\neq0.
\]
\item $\{e_1,e_4,e_2,e_3\}$ is symplectic,
\[
\xi=\left(\begin{array}{cccc}0&0&0&a\\0&0&0&0\\0&0&0&0\\0&0&0&0\end{array}\right) \mbox{ and }b_0=\alpha e_1+\beta e_2+\gamma e_3,\mbox{ where }a,\alpha,\beta,\gamma\in\mathbb{R}\mbox{ with }a\neq0.
\]
\item $\{e_1,e_2,e_3,e_4\}$ is symplectic, 
\[
\xi=\left(\begin{array}{cccc}0&0&0&a\\0&0&0&0\\0&0&0&0\\0&0&0&0\end{array}\right) \mbox{ and }b_0=\alpha e_1+\beta e_3,\mbox{ where }a,\alpha,\beta\in\mathbb{R}\mbox{ with }a\neq0.
\]		
\end{enumerate} 	
\end{pr}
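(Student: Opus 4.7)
The plan is to start from Lemma \ref{Lemma1}, which forces $\xi^2=0$ and $\mathrm{Im}(\xi)$ totally isotropic for any admissible pair $(\xi,b_0)$. The classification then splits into three subcases indexed by $\mathrm{rank}(\xi)\in\{0,1,2\}$: rank $0$ is immediately case (1) (any symplectic basis works, $b_0$ arbitrary), rank $2$ yields case (2), and rank $1$ bifurcates into cases (3) and (4) according to whether $\mathrm{Im}(\xi)$ equals $\mathrm{Im}(\xi^*)$ or not.

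For $\mathrm{rank}(\xi)=2$, the identity $\xi^2=0$ together with the rank forces $\mathrm{Im}(\xi)=\ker\xi$, which is a two-dimensional totally isotropic and hence Lagrangian subspace of $B$. I would choose any basis $\{e_1,e_2\}$ of $\ker\xi$, pick a Lagrangian complement $L'$, and select the basis $\{e_4,e_3\}$ of $L'$ dual to $\{e_1,e_2\}$ via $\om_B(e_1,e_4)=\om_B(e_2,e_3)=1$. In the ordered basis $\{e_1,e_4,e_2,e_3\}$ the form is symplectic and $\xi$ has the announced block-upper-triangular matrix, with $ad-bc\neq 0$ encoding the rank. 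Using the duality $\om_B(\xi x,y)=\om_B(x,\xi^* y)$, one computes $\xi^*$ explicitly; the equation $(\xi^*-\xi)(b_0)=0$ then reduces to a $2\times 2$ linear system on the $(e_3,e_4)$-components of $b_0$ with diagonal entries $a+d$ and off-diagonal entries $2b$ and $2c$, which is generically invertible and forces $b_0=\alpha e_1+\beta e_2$.

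For $\mathrm{rank}(\xi)=1$, $\mathrm{Im}(\xi)$ is a one-dimensional isotropic line, $\ker\xi$ is three-dimensional and coisotropic, and the symplectic duality $\mathrm{Im}(\xi^*)=(\ker\xi)^\perp$ makes $\mathrm{Im}(\xi^*)$ another isotropic line, automatically contained in $\ker\xi$ (since $\xi\xi^*=\xi^2=0$). In case (3), where $\mathrm{Im}(\xi)=\mathrm{Im}(\xi^*)$, I pick $e_1$ spanning this common line, $e_4$ with $\xi(e_4)=ae_1$ and $\om_B(e_1,e_4)=1$, and a symplectic pair $\{e_2,e_3\}$ inside $\ker\xi\cap e_4^\perp$. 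In case (4), where $\mathrm{Im}(\xi)$ and $\mathrm{Im}(\xi^*)$ are two distinct isotropic lines in $\ker\xi$, I pick $e_1\in\mathrm{Im}(\xi)$ and $e_3\in\mathrm{Im}(\xi^*)$, then $e_2\in\ker\xi$ with $\om_B(e_1,e_2)=1$, and finally $e_4\in\mathrm{Span}(e_1,e_2)^\perp\setminus\ker\xi$ rescaled so that $\om_B(e_3,e_4)=1$ and $\xi(e_4)=ae_1$. In both cases, computing $(\xi^*-\xi)$ and solving for $b_0\in\ker(\xi^*-\xi)$ shows that the $e_4$-component of $b_0$ (and in case (4) the $e_2$-component as well) must vanish, yielding exactly the announced forms.

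The main obstacle will be the rank-$1$ case, where the symplectic basis has to be constructed with the correct geometric positioning of the two isotropic lines inside $\ker\xi$. In case (4) one must verify that $\mathrm{Span}(e_1,e_2)^\perp$ is not contained in $\ker\xi$ --- which follows by orthogonality duality from $e_3\notin\mathrm{Span}(e_1,e_2)$ --- so that a valid $e_4$ exists; in case (3) one must verify that the symplectic form restricts nondegenerately to the two-dimensional subspace $\ker\xi\cap e_4^\perp$, which follows by a dimension count using $e_1\in(\ker\xi)^\perp$. Once these geometric facts are established, the matrix representations and the constraints on $b_0$ are routine.
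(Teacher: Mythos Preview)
Your approach matches the paper's almost exactly: both start from Lemma \ref{Lemma1}, split by $\mathrm{rank}(\xi)\in\{0,1,2\}$, and in the rank-one case distinguish whether $\mathrm{Im}(\xi)$ coincides with $(\ker\xi)^\perp$ --- which, as you note, is precisely $\mathrm{Im}(\xi^*)$, so your dichotomy is the paper's dichotomy in different language. Your geometric constructions of the bases in cases (3) and (4), including the verification that $\mathrm{Span}(e_1,e_2)^\perp\not\subset\ker\xi$ and that $\omega_B$ is nondegenerate on $\ker\xi\cap e_4^\perp$, are more detailed than the paper's and are correct.

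One point deserves a flag. In case (2) you say the $2\times 2$ system on the $(e_3,e_4)$-components of $b_0$ is ``generically invertible and forces $b_0=\alpha e_1+\beta e_2$.'' The determinant of that system is $(a+d)^2-4bc$, which can vanish even under $ad-bc\neq 0$: take $a=1$, $d=-1$, $b=c=0$, where one computes $\xi^*=\xi$, so $\ker(\xi^*-\xi)=B$ and $b_0$ is completely unconstrained. Since $\ker\xi=\mathrm{span}\{e_1,e_2\}$ is basis-independent, no change of basis within the form of case (2) can move such a $b_0$ into $\mathrm{span}\{e_1,e_2\}$. The paper's proof simply asserts $b_0=\alpha e_1+\beta e_2$ at this step without argument, so you are in no worse shape; but strictly speaking neither argument closes this case, and the proposition as stated appears to be slightly too strong.
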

\begin{proof}
According to Lemma \ref{Lemma1}, $\xi^2=0$ and $Im(\xi)$ is totally isotropic. Thus $Im(\xi)\subset\ker\xi$ and $\dim Im(\xi)\leq2$.\\
{\it Case 1:} $Im(\xi)=\mathbb{R}e_1$, then we can choose a basis $\{e_1,e_2,e_3,e_4\}$ of $(B,\omega_B)$ such that $\{e_1,e_2,e_3\}$ is a basis of $\ker\xi$. Thus $\xi$ has the form 
	\[
	\xi=\left(\begin{array}{cccc}0&0&0&a\\0&0&0&0\\0&0&0&0\\0&0&0&0\end{array}\right)
	\]
	\begin{itemize}
		\item If $Im(\xi)=\left(\ker \xi\right)^\bot$, then we can choose the basis $\{e_1,e_4,e_2,e_3\}$ to be symplectic. In this case, $\xi^*$ has the form 
	\[
	\xi^*=\left(\begin{array}{cccc}0&0&0&-a\\0&0&0&0\\0&0&0&0\\0&0&0&0\end{array}\right)\mbox{ with }a\neq0.
	\]	
	Since $b_0\in\ker(\xi^*-\xi)$ then $b_0=\alpha e_1+\beta e_2+\gamma e_3$ with $\alpha, \beta, \gamma\in\mathbb{R}$.
	\item If there exists $x\in\ker\xi$ such that $\omega_B(e_1,x)\neq0$, then we can choose the basis $\{e_1,e_2,e_3,e_4\}$ to be symplectic. In this case, $\xi^*$ has the form 
	\[
	\xi^*=\left(\begin{array}{cccc}0&0&0&0\\0&0&0&0\\0&-a&0&0\\0&0&0&0\end{array}\right)\mbox{ with }a\neq0.
	\]	
	Since $b_0\in\ker(\xi^*-\xi)$ then $b_0=\alpha e_1+\beta e_3$ with $\alpha, \beta\in\mathbb{R}$. 
	\end{itemize}
{\it Case 2:} $Im(\xi)=\mathbb{R}e_1+\mathbb{R}e_2$, then $Im(\xi)=\ker\xi$ and there exists a symplectic basis $\{e_1,e_4,e_2,e_3\}$ such that 
\[
\xi=\left(\begin{array}{cccc}0&0&a&b\\0&0&c&d\\0&0&0&0\\0&0&0&0\end{array}\right) \mbox{ with }ad-bc\neq0.
\]
In this case, $\xi^*$ has the form 
\[
\xi^*=\left(\begin{array}{cccc}0&0&-d&-b\\0&0&-c&-a\\0&0&0&0\\0&0&0&0\end{array}\right)\mbox{ and }b_0=\alpha e_1+\beta e_2, \mbox{ with }ad-bc\neq0\mbox{ and }\alpha,\beta\in\mathbb{R}.
\]
\end{proof}

{\bf Second case: $B$ is not abelian.\\}
\begin{pr}\label{nonabelian}
Let $(B,\omega_B)$ be a non abelian flat symplectic Lie algebra of dimension 4. Then $(\xi,b_0)$ is admissible if and only if there exists a symplectic basis $\{e_1,e_4,e_2,e_3\}$ of $(B,\omega_B)$ such that 
\[
\xi=\left(\begin{array}{cccc}0&0&0&0\\0&0&0&0\\a&b&0&0\\c&d&0&0\end{array}\right)\mbox{ and }b_0=\alpha e_3+\beta e_4, \mbox{ with }a,b,c,d,\alpha,\beta\in\mathbb{R}, 
\]
or
\[
\xi=\left(\begin{array}{cccc}0&0&0&0\\0&0&0&0\\a&b&0&c\\0&d&0&0\end{array}\right)\mbox{ and }b_0=-9cde_1+xe_3+9d(a+d)e_4, \mbox{ with }a,b,d,x\in\mathbb{R}\mbox{ and }c\in\mathbb{R}^*. 
\]
\end{pr}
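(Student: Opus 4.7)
The plan is to translate the admissibility system \eqref{eq1}--\eqref{eq5} into a polynomial system in the matrix entries of $\xi$ and the coordinates of $b_0$, using the explicit model of $(B,\om_B)$ provided by Proposition \ref{dim4}, and then to solve it. By that classification, up to symplecto-isomorphism we may take $(B,\om_B)=(\mathbb{R}\times\mathfrak{h}_3,\om_0)$ with basis $(e_1,e_2,e_3,e_4)$, sole non-trivial bracket $[e_1,e_2]=e_3$, and $\om_0=e_1^*\we e_4^*+e_2^*\we e_3^*$; here $\{e_1,e_4\}$ and $\{e_2,e_3\}$ are the symplectic pairs. Formula \eqref{products} then yields the product table
\[
e_1\bullet e_2=\tfrac{2}{3}e_3,\quad e_2\bullet e_1=-\tfrac{1}{3}e_3,\quad e_2\bullet e_2=-\tfrac{1}{3}e_4,
\]
all other products vanishing. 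Consequently $[B,B]=\mathbb{R}e_3$ and $Z(B)=\mathbb{R}e_3\oplus\mathbb{R}e_4$ is Lagrangian; moreover $\Rr_a=0$ for every $a\in Z(B)$, so the only non-zero right multiplications are $\Rr_{e_1}$ and $\Rr_{e_2}$, and $\Rr_{b_0}$ depends only on the $e_1$- and $e_2$-components of $b_0$.

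First I would exploit \eqref{eq4} on the unique non-trivial bracket, which gives $\xi(e_3)=e_1\bullet\xi(e_2)-e_2\bullet\xi(e_1)\in Z(B)$ by inspection of the product table. Then, applying \eqref{eq5} to each pair $(e_i,e_j)$ with $i,j\in\{1,2\}$ and using the symplectic duality to re-express $\xi^*$ in terms of $\xi$, one obtains linear relations that force $\xi(e_1),\xi(e_2)\in Z(B)$, $\xi(e_3)=0$, and $\xi(e_4)\in\mathbb{R}e_3=[B,B]$. At this stage $\xi$ has at most five free entries, and \eqref{eq2} further restricts $b_0$ to $\ker(\xi^*-\xi)$.

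The case split producing the two families then arises from the quadratic conditions \eqref{eq1} and \eqref{eq3} according to whether $\xi(e_4)=0$ or $\xi(e_4)=ce_3$ with $c\neq 0$. If $c=0$, then $\xi$ maps into $Z(B)$, so $\xi^2=0$, $\xi\xi^*=0$ and $\xi^*\xi=0$; \eqref{eq1} then forces $\Rr_{b_0}=0$, and combined with \eqref{eq2} this gives $b_0\in Z(B)$, producing family 1 with $b_0=\alpha e_3+\beta e_4$. If $c\neq 0$, matching \eqref{eq1} against $\xi^2$ entry-by-entry requires the $e_4$-coefficient of $\xi(e_1)$ to vanish (so $\xi(e_1)\in\mathbb{R}e_3$), while the $\tfrac{1}{3}\Rr_{b_0}$-correction forces the $e_1$-coefficient of $b_0$ to be $-9cd$; condition \eqref{eq2} then pins the $e_4$-coefficient of $b_0$ to $9d(a+d)$, and the $e_3$-coefficient remains free, producing family 2. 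Finally, a direct substitution verifies that every pair $(\xi,b_0)$ in either family satisfies all five admissibility equations; \eqref{eq3} in particular reduces to the identity $\Rr_{b_0}+\Rr_{b_0}^*=0$, which is automatic because the symplectic adjoint of $\Rr_{e_1}$ is $-\Rr_{e_1}$ in this basis.

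The main technical obstacle is the quadratic matching step: computing $\Rr_{b_0}^*$ in a basis where $\om$ pairs $e_1$ with $e_4$ and $e_2$ with $e_3$ permutes the rows and columns of $\Rr_{b_0}$ in a non-trivial way, and the precise coefficients $-9cd$ and $9d(a+d)$ arise only after equations \eqref{eq1} and \eqref{eq2} are simultaneously enforced with the correct factors of $\tfrac{1}{3}$ coming from the product table. Keeping track of signs and factors at this stage is the main source of potential error; any sign slip or miscount of a factor $3$ would produce different integers.
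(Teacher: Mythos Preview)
Your approach is the same as the paper's: fix the model $(\mathbb{R}\times\mathfrak{h}_3,\om_0)$, write $\xi=(a_{ij})$, impose \eqref{eq1}--\eqref{eq5} as polynomial constraints on the $a_{ij}$ and the coordinates of $b_0$, and split on whether the $(3,4)$-entry of $\xi$ vanishes. The case analysis you sketch is also the paper's.

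There is, however, one genuine gap in your reduction step. You say you will exploit \eqref{eq4} only on ``the unique non-trivial bracket'' $[e_1,e_2]=e_3$, and then use \eqref{eq5} on the four pairs $(e_i,e_j)$ with $i,j\in\{1,2\}$ to conclude $\xi(e_4)\in\mathbb{R}e_3$. This does not follow: those eight scalar relations kill $a_{12},a_{21},a_{13},a_{23},a_{24},a_{43}$ and force $a_{11}=a_{22}=a_{33}=a_{44}=0$, but they leave the entry $a_{14}$ (the $e_1$-coefficient of $\xi(e_4)$) completely free. The missing constraint comes precisely from \eqref{eq4} applied to a \emph{trivial} bracket. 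For $[e_2,e_4]=0$ the identity \eqref{eq4} reads
\[
0=e_2\bullet\xi(e_4)-e_4\bullet\xi(e_2)=a_{14}\,e_2\bullet e_1+a_{24}\,e_2\bullet e_2= -\tfrac{a_{14}}{3}e_3-\tfrac{a_{24}}{3}e_4,
\]
which is exactly what forces $a_{14}=0$ (and re-derives $a_{24}=0$). The point is that although $\xi([a,b])=0$ when $[a,b]=0$, the right-hand side $a\bullet\xi(b)-b\bullet\xi(a)$ need not vanish, so \eqref{eq4} on vanishing brackets is not vacuous. The paper uses this explicitly. Once $a_{14}=0$ is in hand, your $c=0$/$c\neq0$ split goes through; the constants $-9cd$ and $9d(a+d)$ then drop out of \eqref{eq3} and \eqref{eq2} as you indicate, although in the paper the vanishing of the $e_4$-coefficient of $\xi(e_1)$ in the $c\neq0$ family comes from \eqref{eq3} (via $\beta=9a_{34}a_{41}$ together with $\beta=0$) rather than from \eqref{eq1}.
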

\begin{proof}
Let $(B,\omega_B)$ be a non abelian flat symplectic Lie algebra of dimension 4. Then $B$ is isomorphic to $\mathbb{R}\times\mathfrak{h}_3$: $[e_1,e_2]=e_3$ and $\{e_1,e_4,e_2,e_3\}$ is a symplectic basis. According to \eqref{products}, one has
\[
e_1e_2=\frac{2}{3}e_3,\ e_2e_1=-\frac{1}{3}e_3,\ e_2e_2=-\frac{1}{3}e_4.
\] 	
Recall that $(\xi,b_0)$ is admissible if and only if \eqref{eq1}-\eqref{eq5} hold. We put $\xi=(a_{ij})_{1\leq i,j\leq4}$. From $\xi\left([e_1,e_2]\right)=e_1\xi(e_2)-e_2\xi(e_1)$ we deduce that $a_{13}=a_{23}=0$, $3a_{33}=a_{11}+2a_{22}$ and $3a_{43}=a_{21}$. In the same way, by replacing $[a,b]$ with $[e_2,e_4]$ in \eqref{eq1} we get $a_{14}=a_{24}=0$. Thus  
\[
\xi=\left(\begin{array}{cccc}a_{11}&a_{12}&0&0\\a_{21}&a_{22}&0&0\\a_{31}&a_{32}&\frac{a_{11}+2a_{22}}{3}&a_{34}\\a_{41}&a_{42}&\frac{a_{21}}{3}&a_{44}\end{array}\right)
\]
Since $\{e_1,e_4,e_2,e_3\}$ is a symplectic basis then
\[
\xi^*=\left(\begin{array}{cccc}a_{44}&a_{34}&0&0\\\frac{a_{21}}{3}&\frac{a_{11}+2a_{22}}{3}&0&0\\-a_{42}&-a_{32}&a_{22}&a_{12}\\-a_{41}&-a_{31}&a_{21}&a_{11}\end{array}\right)
\]
In \eqref{eq2}, if we take $a=b=e_1$, we get $a_{21}=0$. For $a=e_1$ and $b=e_2$, we get 
\begin{eqnarray}\label{E1}
3a_{44}=4a_{11}+2a_{22}.
\end{eqnarray}
 For $a=e_2$ and $b=e_1$, we get 
 \begin{eqnarray}\label{E2}
 3a_{44}=a_{11}+5a_{22}.
 \end{eqnarray}
 Finaly, for $a=e_2$ and $b=e_2$, we get $a_{12}=0$. From \eqref{E1}-\eqref{E2} we deduce that $a_{22}=a_{11}$ and $a_{44}=2a_{11}$. Let $b_0=\alpha e_1+\beta e_2+\gamma e_3+\delta e_4$. We have
\[
\Rr_{b_0}=\left(\begin{array}{cccc}0&0&0&0\\0&0&0&0\\\frac{2\beta}{3}&-\frac{\alpha}{3}&0&0\\0&-\frac{\beta}{3}&0&0\end{array}\right)
\]
Accoding to \eqref{eq3}, we deduce that $a_{11}=0$, $\alpha=-9a_{34}a_{42}$ and $\beta=9a_{34}a_{41}$. From \eqref{eq4}, we deduce that $\beta=0$.
\begin{itemize}
	\item If $a_{34}=0$, then $\alpha=0$ and hence $(\xi,b_0)$ is admissible in this case and has the form
\[
\xi=\left(\begin{array}{cccc}0&0&0&0\\0&0&0&0\\a&b&0&0\\c&d&0&0\end{array}\right)\mbox{ and }b_0=x e_3+y e_4, \mbox{ with }a,b,c,d,x,y\in\mathbb{R}. 
\]
	\item If $a_{34}\neq0$ then $a_{41}=0$ and $\alpha=-9a_{34}a_{42}$. The equation \eqref{eq2} is equivalent to $\delta=9a_{42}(a_{31}+a_{42})$. In this case, $(\xi,b_0)$ is admissible and has the form
	\[
	\xi=\left(\begin{array}{cccc}0&0&0&0\\0&0&0&0\\a&b&0&c\\0&d&0&0\end{array}\right)\mbox{ and }b_0=-9cde_1+xe_3+9d(a+d)e_4, \mbox{ with }a,b,d,x\in\mathbb{R}\mbox{ and }c\in\mathbb{R}^*. 
	\]
\end{itemize}
\end{proof}
Using Proposition \ref{abelian} and Proposition \ref{nonabelian}, we can determine all flat symplectic Lie algebras of dimension 6.
\begin{theo}\label{dim6}
	A 6-dimensional Lie algebra $\G$ admits a flat symplectic form if and only if $\G$ is isomorphic to one and only one of the following Lie algebras:
	\begin{itemize}
		\item $\mathbb{R}^6$: The 6-dimensional abelian Lie algebra.
		\item $\mathbb{R}^3\times\mathfrak{h}_3$: $[x_1,x_2]=x_6$.
		\item $\G_6^1$: $[x_1,x_2]=x_4$, $[x_1,x_3]=x_5$, $[x_2,x_3]=x_6$.
		\item $\G_6^2$: $[x_1,x_2]=x_5$, $[x_1,x_3]=x_6$.
		\item $\G_6^3$: $[x_1,x_2]=x_4$, $[x_1,x_3]=x_5$, $[x_1,x_4]=x_6$, $[x_2,x_3]=x_6$.
	\end{itemize}
\end{theo}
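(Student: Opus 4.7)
The plan is to combine Theorem \ref{th double} with the detailed classification of admissible couples already obtained in Propositions \ref{abelian} and \ref{nonabelian}. By Theorem \ref{th double}, any 6-dimensional flat symplectic Lie algebra $(\G,\om)$ is a double extension of some 4-dimensional flat symplectic Lie algebra $(B,\om_B)$, and by Proposition \ref{dim4} the only possibilities for $B$ (up to isomorphism) are $\R^4$ (abelian) or $\R\times\mathfrak{h}_3$. So the proof reduces to two sweeps: one for each possible $B$, running over every admissible couple $(\xi,b_0)$ and reading off the Lie brackets of $\G=\R e\oplus B\oplus\R\bar e$ from \eqref{Liebrackets}.

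First I would treat $B=\R^4$ abelian, using the four parametric families of Proposition \ref{abelian}. In each family the second bracket in \eqref{Liebrackets} vanishes (since $[a,b]_B=0$ and then the term $\om_B((\xi+\xi^*)(a),b)e$ can be absorbed as part of the central part), so only $[\bar e,a]=(\xi^*-2\xi)(a)+\om_B(b_0,a)e$ contributes non-trivially. For each parametric matrix $\xi$ and each allowed $b_0$, I compute $\xi^*-2\xi$ explicitly in the given symplectic basis, then look for a change of basis of the form $e_i\mapsto e_i'$ (respecting the decomposition $\R e\oplus B\oplus\R\bar e$ only up to Lie isomorphism, not symplectic isomorphism) that brings the brackets into one of the listed normal forms. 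Case 1 ($\xi=0$) gives $\R^6$ or $\R^3\times\mathfrak h_3$ depending on whether $b_0$ is zero or not. Cases 2, 3, 4 give 2-step nilpotent Lie algebras with $\dim[\G,\G]\in\{1,2,3\}$, and careful tracking of the rank of $\xi^*-2\xi$ and of $\om_B(b_0,\cdot)$ will route each subcase to one of $\R^3\times\mathfrak h_3$, $\G_6^2$, or $\G_6^1$.

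Second I would treat $B=\R\times\mathfrak h_3$ with the two families of Proposition \ref{nonabelian}. Here the bracket $[a,b]=[a,b]_B+\om_B((\xi+\xi^*)(a),b)e$ is non-trivial, so the derived ideal of $\G$ contains $e_3=[e_1,e_2]_B$ and additional vectors coming from $[\bar e,a]$. A straightforward computation of $[\G,\G]$ and of $[\G,[\G,\G]]$ then shows that either $\G$ is 2-step nilpotent and isomorphic to one of $\G_6^1$, $\G_6^2$, or $\R^3\times\mathfrak h_3$, or $\G$ is 3-step nilpotent, which (since $[\bar e,e_2]$ can contain a component along $e_4=-3e_1e_2\in B$ via the entries $c,d$ in $\xi$) is the only source of $\G_6^3$. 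I identify $\G_6^3$ by the presence of a non-trivial iterated bracket $[x_1,[x_1,x_2]]=x_6\neq0$, which is absent from the other four.

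Finally I would verify that the five Lie algebras in the list are pairwise non-isomorphic. This is done by computing the following isomorphism invariants: nilpotency class ($\G_6^3$ is the unique 3-step case); $\dim[\G,\G]$ (equal to $0,1,2,3,2$ for $\R^6$, $\R^3\times\mathfrak h_3$, $\G_6^2$, $\G_6^1$, $\G_6^3$ respectively); and, to separate $\G_6^2$ from $\G_6^3$, the dimension of the center (respectively $4$ and $1$). The main obstacle of the proof is purely bookkeeping: ensuring that the many sub-branches produced by the parameters $(a,b,c,d,\alpha,\beta,\gamma,\delta,x,y)$ in Propositions \ref{abelian} and \ref{nonabelian} are each correctly normalized by a Lie-algebra change of basis and routed to exactly one of the five target isomorphism classes, without either missing a case or introducing a spurious additional one.
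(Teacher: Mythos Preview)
Your overall strategy---reduce to the double extension of a 4-dimensional flat symplectic Lie algebra via Theorem \ref{th double} and Proposition \ref{dim4}, then sweep over the admissible couples of Propositions \ref{abelian} and \ref{nonabelian}---is exactly the route the paper takes. However, there is a genuine gap in your treatment of the abelian case.

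You claim that when $B$ is abelian ``the second bracket in \eqref{Liebrackets} vanishes (since $[a,b]_B=0$ and then the term $\om_B((\xi+\xi^*)(a),b)e$ can be absorbed as part of the central part), so only $[\bar e,a]$ contributes non-trivially''. This is false. The term $\om_B((\xi+\xi^*)(a),b)e$ does \emph{not} vanish in general, and it cannot simply be absorbed. Concretely, in family (2) of Proposition \ref{abelian} one has $[e_3,e_4]=(a-d)e$, and in family (4) one has $[e_2,e_4]=-ae$ with $a\neq0$. These brackets between elements of $B$ are essential: if only the brackets $[\bar e,a]$ were nonzero, then $[\G,\G]=\ad_{\bar e}(\G)$, and in families (2) and (4) this image is at most two-dimensional. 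But $\G_6^1$ has $\dim[\G,\G]=3$, and moreover in $\G_6^1$ one checks that $\dim\ad_v(\G)\leq 2$ for every $v$, so no single element can generate the derived ideal. Hence your ``only $[\bar e,a]$'' reduction would route family (4) (and the generic subcase of family (2)) to $\G_6^2$ instead of $\G_6^1$, and $\G_6^1$ would never appear from the abelian branch. The paper's proof handles this correctly by tracking the rank of the triple $\{[\bar e,e_3],[\bar e,e_4],[e_3,e_4]\}$ in family (2), and by keeping the bracket $[e_2,e_4]=-ae$ in family (4).

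A secondary issue: your invariants at the end are miscomputed. For $\G_6^3$ one has $[\G,\G]=\langle x_4,x_5,x_6\rangle$, so $\dim[\G,\G]=3$, not $2$; and $Z(\G_6^2)=\langle x_4,x_5,x_6\rangle$ has dimension $3$, while $Z(\G_6^3)=\langle x_5,x_6\rangle$ has dimension $2$. The five algebras are still pairwise non-isomorphic (nilpotency class separates $\G_6^3$; $\dim[\G,\G]$ then separates the remaining four), but your stated numbers need correcting.
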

\begin{proof}
	Any 6-dimensional flat symplectic Lie algebra $(\G,\omega)$ is a double extension of a 4-dimensional flat symplectic Lie algebra $(B,\omega_B)$ by means of an admissible couple $(\xi,b_0)$.\\
	{\it Case 1:} If $B$ is abelian, then $(\xi,b_0)$ has one of the forms given in Proposition \ref{abelian}. 
	\begin{itemize}
		\item If $\xi=0$ and $b_0=\alpha e_1+\beta e_2+\gamma e_3+\delta e_4\in B$, then according to \eqref{Liebrackets}, one has
		\[
		[\bar{e},e_1]=-\delta e,\ [\bar{e},e_2]=-\gamma e,\ [\bar{e},e_3]=\beta e,\ [\bar{e},e_4]=\alpha e.
		\]
		If $\alpha=\beta=\gamma=\delta=0$, then $\G$ is the abelian Lie algebra $\mathbb{R}^6$. If, for example, $\alpha\neq0$, then we put $x_1=\bar{e}$, $x_2=e_4$, $x_6=\alpha e$, $x_3=\alpha e_3-\beta e_4$, $x_4=\alpha e_2+\gamma e_4$ and $x_5=\alpha e_1+\delta e_4$. Thus, the only non vanishing Lie bracket in this basis is $[x_1,x_2]=x_6$. It follows that $\G$ is isomorphic in this case to $\mathbb{R}^3\times\mathfrak{h}_3$. 
		\item If 
		\[
		\xi=\left(\begin{array}{cccc}0&0&a&b\\0&0&c&d\\0&0&0&0\\0&0&0&0\end{array}\right)\mbox{ and }b_0=\alpha e_1+\beta e_2, 
		\]
		where $\{e_1,e_4,e_2,e_3\}$ is symplectic and $ad-bc\neq0$. Then 
\[
\xi^*=\left(\begin{array}{cccc}0&0&-d&-b\\0&0&-c&-a\\0&0&0&0\\0&0&0&0\end{array}\right), 
\]
and according to \eqref{Liebrackets}, one has
\[
[\bar{e},e_3]=-(2a+d)e_1-3ce_2+\beta e,\ [\bar{e},e_4]=-3be_1-(a+2d)e_2+\alpha e,\ [e_3,,e_4]=(a-d)e.
\] 	
If the vectors $e_1'=-(2a+d)e_1-3ce_2+\beta e$, $e_2'=-3be_1-(a+2d)e_2+\alpha e$ and $e'=(a-d)e$ are linearly independent then $[\bar{e},e_3]=e_1'$, $[\bar{e},e_4]=e_2'$ and $[e_3,e_4]=e'$ and hence $\G$ is isomorphic in this case to $\G_6^1$. If $rank\{e_1',e_2',e'\}=2$, for example, $\{e_1',e_2'\}$ are linearly independent and $e'=\lambda_1e_1'+\lambda_2e_2'$, then $[\bar{e},e_3]=e_1'$, $[\bar{e},e_4]=e_2'$ and $[e_3,e_4]=\lambda_1e_1'+\lambda_2e_2'$. We put $e_3'=e_3-\lambda_2\bar{e}$ and $e_4'=e_4+\lambda_1\bar{e}$. In this basis, the only non vanishing Lie brackets are $[\bar{e},e_3']=e_1'$, $[\bar{e},e_4']=e_2'$, and hence $\G$ is isomorphic in this case to $\G_6^2$. If $rank\{e_1',e_2',e'\}=1$, then it is easy to check that $\G$ is isomorphic to $\mathbb{R}^3\times\mathfrak{h}_3$.
\item If 
\[
\xi=\left(\begin{array}{cccc}0&0&0&a\\0&0&0&0\\0&0&0&0\\0&0&0&0\end{array}\right)\mbox{ and }b_0=\alpha e_1+\beta e_2+\gamma e_3, 
\]
where $\{e_1,e_4,e_2,e_3\}$ is symplectic and $a\neq0$. Then $\xi^*=-\xi$ and $\G$ is isomorphic in this case to $\mathbb{R}^3\times\mathfrak{h}_3$ if $\beta=\gamma=0$, or to $\G_6^2$ if $(\beta,\gamma)\neq(0,0)$.
\item If 
\[
\xi=\left(\begin{array}{cccc}0&0&0&a\\0&0&0&0\\0&0&0&0\\0&0&0&0\end{array}\right)\mbox{ and }b_0=\alpha e_1+\beta e_3, 
\]
where $\{e_1,e_2,e_3,e_4\}$ is symplectic and $a\neq0$. Then
\[
\xi^*=\left(\begin{array}{cccc}0&0&0&0\\0&0&0&0\\0&-a&0&0\\0&0&0&0\end{array}\right)
\]
and the Lie backets are given by 
\[
[\bar{e},e_2]=-ae_3-\beta e,\ [\bar{e},e_4]=-2ae_1+\alpha e,\ [e_2,e_4]=-ae. 
\]
Since $a\neq0$, then by putting $e_3'=-ae_3-\beta e$, $e_4'=-2ae_1+\alpha e$ and $e'=-ae$ one can check that $\G$ is isomorphic in this case to $\G_6^1$.
\end{itemize}
 {\it Case 2:} If $B$ is not abelian, then $(\xi,b_0)$ has one of the two forms given in Proposition \ref{nonabelian}. 
 \begin{itemize}
 	\item If 
\[
\xi=\left(\begin{array}{cccc}0&0&0&0\\0&0&0&0\\a&b&0&0\\c&d&0&0\end{array}\right)\mbox{ and }b_0=\alpha e_3+\beta e_4, 
\]
where $\{e_1,e_4,e_2,e_3\}$ is symplectic. Then
\[
\xi^*=\left(\begin{array}{cccc}0&0&0&0\\0&0&0&0\\-d&-b&0&0\\-c&-a&0&0\end{array}\right)
\]
and the Lie backets are given by 
\[
[\bar{e},e_1]=-(2a+d)e_3-3ce_4-\beta e,\ [\bar{e},e_2]=-3be_3-(a+2d)e_4-\alpha e,\ [e_1,e_2]=e_3+(d-a)e. 
\]
As we showed in the abelian case, one can check that $\G$ is isomorphic to $\mathbb{R}^3\times\mathfrak{h}_3$, $\G_6^1$ or $\G_6^2$.
\item If 
\[
\xi=\left(\begin{array}{cccc}0&0&0&0\\0&0&0&0\\a&b&0&c\\0&d&0&0\end{array}\right)\mbox{ and }b_0=-9cde_1+xe_3+9d(a+d)e_4, 
\]
where $\{e_1,e_4,e_2,e_3\}$ is symplectic and $c\neq0$. Then
 	\[
 	\xi^*=\left(\begin{array}{cccc}0&c&0&0\\0&0&0&0\\-d&-b&0&0\\0&-a&0&0\end{array}\right)
 	\]
 	and the Lie brackets are given by 
 	\begin{eqnarray*}
 	[\bar{e},e_1]&=&-(2a+d)e_3-9d(a+d)e,\ [\bar{e},e_2]=ce_1-3be_3-(a+2d)e_4-xe,\\
 \ 	[\bar{e},e_4]&=&-2ce_3-9cde,\ [e_1,e_2]=e_3+(d-a)e,\ [e_2,e_4]=ce.
 	\end{eqnarray*}
 By taking, $\bar{f}=\frac{\bar{e}}{c}$, $f_1=e_1-\frac{3b}{c}e_3-\frac{a+2d}{c}e_4-\frac{x}{c}e$, $f_2=e_2$, $f_3=e_3$, $f_4=-\frac{e_4}{2}$ and $f=-\frac{c}{2}e$. We reduce the Lie brackets to
 \begin{eqnarray*}
 	[\bar{f},f_1]&=&\frac{3d}{c}f_3-\frac{18d^2}{c^2}f,\ [\bar{f},f_2]=f_1,\\
 	\ 	[\bar{f},f_4]&=&f_3-\frac{9d}{c}f,\ [f_1,f_2]=f_3-\frac{6d}{c}f,\ [f_2,f_4]=f.
 \end{eqnarray*}
We replace $f_3$ by $f_3'=f_3-\frac{6d}{c}f$ and $\bar{f}$ by $\bar{f}'=\bar{f}+\frac{3d}{c}f_2$. Thus, the Lie brackets will be 
\[
[\bar{f}',f_2]=f_1,\ [\bar{f}',f_4]=f_3',\ [f_1,f_2]=f_3',\ [f_2,f_4]=f.
\]
By putting $x_1=f_2$, $x_4=-f_1$, $x_6=f_3'$, $x_5=f$, $x_3=f_4$, $x_2=\bar{f}'$, one can see that $\G$ is isomorphic to $\G_6^3$. This completes the proof of the theorem.
 \end{itemize}
\end{proof}
\begin{rem}
\begin{itemize}
	\item In Theorem \ref{dim6}, the Lie algebras $\mathbb{R}^3\times\mathfrak{h}_3$, $\G_6^1$ or $\G_6^2$ are 2-step nilpotent and $\G_6^3$ is 3-step nilpotent.
	\item We can also determine the flat symplectic form on the Lie algebras given in Theorem \ref{dim6} by making the same changes we made for the basis $\{e,e_1,e_2,e_3,e_4,\bar{e}\}$ to the initial flat symplectic form $e^*\wedge\bar{e}^*+\omega_B$.
		\item We have preferred to describe the Lie algebras in Theorem \ref{dim6} by giving the same Lie brackets in the classification of symplectic nilpotent Lie algebras given in page 51 of \cite{khakim}. Note that the Lie algebras $\mathbb{R}^3\times\mathfrak{h}_3$, $\G_6^1$, $\G_6^2$ and $\G_6^3$ are respectively denoted in \cite{khakim} by classes 25, 18, 23 and 13.   
\end{itemize}	
\end{rem}
We give here examples of flat symplectic forms on the Lie algebras mentioned in Theorem \ref{dim6} and the coresponding product $\bullet$. We use the same notations in \cite{khakim}.  
\vspace{0.7cm}

\begin{small}
\begin{tabular}{|c|c|c|c|}
\hline	
{\scriptsize{\bf Lie algebras}}&{\scriptsize{\bf Non-vanishing Lie brackets}}&{\scriptsize{\bf Flat symplectic forms}}&{\scriptsize{\bf Non-vanishing products $\bullet$}}\\\hline$\mathbb{R}^6$&&$x_1^*\wedge x_6^*+x_2^*\wedge x_5^*+x_3^*\wedge x_4^*$&\\\hline$\mathbb{R}^3\times\mathfrak{h}_3$&$[x_1,x_2]=x_6$&$x_1^*\wedge x_6^*+x_2^*\wedge x_5^*+x_3^*\wedge x_4^*$&$x_1\bullet x_1=\frac{1}{3}x_5,\ x_1\bullet x_2=\frac{1}{3}x_6$,\ \\&&&$x_2\bullet x_1=-\frac{2}{3}x_6$.\\\hline
&&&$x_1\bullet x_2=\frac{1-2\lambda}{3-3\lambda}x_4,\ x_1\bullet x_3=\frac{2\lambda-1}{3\lambda}x_5$\\$\G_6^1$&$[x_1,x_2]=x_4$, $[x_1,x_3]=x_5$,&$x_1^*\wedge x_6^*+\lambda x_2^*\wedge x_5^*+(\lambda-1)$&$x_2\bullet x_1=\frac{\lambda-2}{3-3\lambda}x_4,\ x_2\bullet x_3=\frac{2-\lambda}{3}x_6$,\\&$[x_2,x_3]=x_6$&$x_3^*\wedge x_4^*$,  $\lambda\in\mathbb{R}-\{0,1\}.$&$x_3\bullet x_1=\frac{-\lambda-1}{3\lambda}x_5,\ x_3\bullet x_2=\frac{-1-\lambda}{3}x_6$,\\\hline&&&$x_1\bullet x_1=\frac{1}{3}x_4,\ x_1\bullet x_2=\frac{2}{3}x_5$,\\$\G_6^2$&$[x_1,x_2]=x_5$, $[x_1,x_3]=x_6$&$x_1^*\wedge x_6^*+x_2^*\wedge x_5^*+x_3^*\wedge x_4^*$&$x_1\bullet x_3=\frac{1}{3}x_6,\ x_2\bullet x_1=-\frac{1}{3}x_5$,\\&&&$x_2\bullet x_2=-\frac{1}{3}x_6,\ x_3\bullet x_1=-\frac{2}{3}x_6$.\\\hline&&&$x_1\bullet x_1=\frac{2}{3}x_3,\ x_1\bullet x_4=\frac{1}{3}x_6$,\\$\G_6^3$&$[x_1,x_2]=x_4$, $[x_1,x_3]=x_5$&$x_1^*\wedge x_6^*+\frac{1}{2}x_2^*\wedge x_5^*$&$x_2\bullet x_1=-x_4,\ x_2\bullet x_3=\frac{1}{2}x_6$,\\&$[x_1,x_4]=x_6$, $[x_2,x_3]=x_6$&$-\frac{1}{2}x_3^*\wedge x_4^*$&$x_3\bullet x_1=-x_5,\ x_3\bullet x_2=\frac{1}{6}x_6$,\\&&&$x_4\bullet x_1=-\frac{2}{3}x_6$.\\\hline
\end{tabular}
\end{small}

\bibliographystyle{elsarticle-num}

\end{document}